\theoremstyle{plain}
\newtheorem{theorem}{Theorem}
\newtheorem{lemma}[theorem]{Lemma}
\newtheorem{definition}[theorem]{Definition}
\newtheorem{corollary}[theorem]{Corollary}
\newtheorem{proposition}[theorem]{Proposition}
\newtheorem{remark}[theorem]{Remark}
\newtheorem{method}{Method}
\newcommand{\beq}{\begin{equation}}
\newcommand{\eeq}{\end{equation}}
\newcommand{\beqa}{\begin{eqnarray}}
\newcommand{\eeqa}{\end{eqnarray}}
\newcommand{\beqas}{\begin{eqnarray*}}
\newcommand{\eeqas}{\end{eqnarray*}}
\def\argmin{\operatorname{argmin}}
\def\min{\operatorname{min}}
\def\max{\operatorname{max}}
\DeclareMathOperator{\grad}{grad}
\DeclareMathOperator{\diag}{diag}
\begin{document}

\title{Unconstrained steepest descent method for multicriteria  optimization on Riemmanian manifolds}

\author{
G. C. Bento\thanks{IME, Universidade Federal de Goi\'as,
Goi\^ania, GO 74001-970, BR (Email: {\tt glaydston@mat.ufg.br})}
\and
O. P. Ferreira
\thanks{IME, Universidade Federal de Goi\'as,
Goi\^ania, GO 74001-970, BR (Email: {\tt orizon@mat.ufg.br}).
The author was supported in part by CNPq Grant 302618/2005-8,  PRONEX--Optimization(FAPERJ/CNPq) and FUNAPE/UFG.}\and
P. R. Oliveira \thanks{COPPE-Sistemas, Universidade Federal do Rio de Janeiro,
Rio de Janeiro, RJ 21945-970, BR (Email: {\tt poliveir@cos.ufrj.br}).
This author was supported in part by CNPq.}
}
\date{October 29, 2010}

\maketitle
\begin{abstract}
In this paper we present a steepest descent method with Armijo's rule for multicriteria optimization in the Riemannian context. The well definedness of the sequence generated by the method is guaranteed. Under mild assumptions on the multicriteria function, we prove that each accumulation point (if they exist) satisfies first-order necessary conditions for Pareto optimality. Moreover, assuming quasi-convexity of the multicriteria function and non-negative curvature of the Riemannian manifold, we prove full convergence of the sequence to a Pareto critical.
\end{abstract}
{\bf Key words:} Steepest descent, Pareto optimality, Vector optimization,   Quasi-Fej\'er convergence, Quasi-convexity,  Riemannian manifolds.
%%%%%%%%%%%%%%%
%%%%%%%%%%%%%%%%
\section{Introduction}
Consider o following minimization problem
\begin{eqnarray}\label{po:conv777}
\begin{array}{clc}
   & \min  F(p) \\
   & \textnormal{s.t.}\,\,\, p\in X.\\
\end{array}
\end{eqnarray}
In case that $F:\mathbb{R}^n\to\mathbb{R}$ and $X=\mathbb{R}^n$, the steepest descent method with Armijo's rule generates a sequence $\{p^k\}$ as follows 
\[
p^{k+1}=p^k+t_kv^k, \qquad v^k=-F'(p^k),\qquad k=0,1,\ldots,
\]
where 
\[
t_k=\max\{2^{-j}: F(p^k-tv^k)\leq F(p^k)+\beta v^k, j=0,1\ldots\}, 
\]
$\beta\in (0,1)$. If $F$ is  continuously differentiable, classic results assure only that any accumulation point of $\{p^k\}$, case there exist, are critical of $F$. This fact was generalized for multicriteria optimization by Fliege and Svaiter \cite{Benar2000}, namely, whenever the objective function is a vectorial function $F:\mathbb{R}^n\to\mathbb{R}^m$ and the partial order in $\mathbb{R}^m$ is the usual, i.e., the componet-wise order. Full convergence is assured under the assumption that the solution set of the problem $\eqref{po:conv777}$ is not-empty and $F:\mathbb{R}^n\to\mathbb{R}$ is  a convex function, see Burachik et al. \cite{Bur1995} (or, more generally, a quasi-convex function, see Kiwiel and Murty \cite{Kiwiel1996}), which  has been  generalized for vector optimization by Gra\~na Drummond and Svaiter \cite{Benar2005} (see also, Gra\~na Drummond and Iusem~\cite{Iusem2004}).

Extension of concepts, techniques as well as methods from Euclidean spaces to  Riemannian manifolds  is natural and, in general, nontrivial. In the last few years, such extension setting with purpose practical and theoretical has been the subject of many new research. Recent works dealing with this issue include \cite{Absil2007, Teboulle2004, Azagra2005, BP09, FS02, XFLN2006, FO98, Zhu2007, Chong Li2009, lili09-1, PP08, PO2009, wangli09-2, wangli09, wanglihu09}. The generalization of optimization methods from Euclidean space to Riemannian manifold have some important advantages. For example, constrained optimization problems can be seen as unconstrained one from the Riemannian geometry viewpoint (the set constrained is a manifold) and, in this case, we have an alternative possibility besides the projection idea for solving the problem. Moreover, nonconvex problems in the classical context may become convex through the introduction of an appropriate Riemannian metric (see, for example \cite{XFLN2006}).

The steepest descent method for the problem~\eqref{po:conv777}, in the particular case that $X=M$ ($M$ a Riemannian manifold) and $F:M\to\mathbb{R}$ is continuously differentiable has been studied by Udriste~\cite{U94}, Smith~\cite{S94} and Rapcs\'ak~\cite{Rap97} and partial convergence results were obtained. For the convex case the full convergence, using Armijo's rule,  has been generalized by da Cruz Neto et al. \cite{XLO1998}, in the particular case that $M$ has non-negative curvature. Regarding  to the same restrictive assumption on the manifold $M$, Papa Quiroz et al. \cite{PP08} generalized the full convergence result using generalized Armijo's rule for the quasiconvex case.

In this paper, following the ideas of Fliege and Svaiter \cite{Benar2000}, we generalize its converge results for multicriteria optimization to the Riemannian context. Besides, following the ideas of Gra\~na Drummond and Svaiter \cite{Benar2005}, we generalize the full convergence result for multicriteria optimizationin in the case that the multicriteria function is quasi-convex and the Riemannian manifold has non-negative curvature.

The organization of our paper is as follows. In Section~\ref{sec1} we define the notations and
list some results of Riemannian geometry to be used throughout this paper. In Section~\ref{sec2} we present the multicriteria problem, the first order optimality condition for it and some basic definitions related. In the Section~\ref{sec3} we state the Riemannian steepest  descent methods for solving  multicriteria problems and establish the well definition of the sequence generated for it. In Section~\ref{sec4} we prove a partial convergence result without any additional assumption on $F$ besides the continuity differentiable and, assuming quasi-convexity of $F$ and not-negative curvature for $M$, a full convergence result is presented. Finally, in Section~\ref{sec5} we present some examples of complete Riemannian manifolds with explicit geodesic curves and the steepest descent iteration of the sequence generated by the proposed method.

%%%%%%%%%%%%%%%%%%%%%%%%%%%%%%%%%%%%%%%%%%%%%%%%%%%%%%%%%%%%%%
\section{Preliminaries on Riemannian geometry}\label{sec1}
%%%%%%%%%%%%%%%%%%%%%%%%%%%%%%%%%%%%%%%%%%%%%%%%%%%%%%%%%%%%%%
In this section, we introduce some fundamental  properties and notations of Riemannian manifold. These basics facts can be found in any introductory book of Riemannian geometry, for example in \cite{MP92} and \cite{S96}.

Let $M$ be a $n$-dimentional connected manifold. We denote by $T_pM$ the $n$-dimensional {\it tangent space} of $M$ at $p$, by $TM=\cup_{p\in M}T_pM$ {\itshape{tangent bundle}} of $M$ and by ${\cal X}(M)$ the space of smooth vector fields over $M$. When $M$ is endowed with a Riemannian metric $\langle \,,\, \rangle$, with corresponding norm denoted by $\| \; \|$, then $M$ is now a Riemannian manifold. Recall that the metric can be used to define the length of piecewise smooth curves $\gamma:[a,b]\rightarrow M$ joining $p$ to $q$, i.e., such that $\gamma(a)=p$ and $\gamma(b)=q$, by
\[
l(\gamma)=\int_a^b\|\gamma^{\prime}(t)\|dt,
\]
and, moreover, by minimizing this length functional over the set of all such curves, we obtain a Riemannian distance $d(p,q)$ which induces the original topology on $M$. The metric induces a map $f\mapsto\grad f\in{\cal X}(M)$ which associates to each scalar function smooth over $M$ its gradient via the rule $\langle\grad f,X\rangle=d f(X),\ X\in{\cal X}(M)$. Let $\nabla$ be the Levi-Civita connection associated to $(M,{\langle} \,,\, {\rangle})$. A vector field $V$ along $\gamma$ is said to be {\it parallel} if $\nabla_{\gamma^{\prime}} V=0$. If $\gamma^{\prime}$ itself is parallel we say that $\gamma$ is a {\it geodesic}. Because the geodesic equation $\nabla_{\ \gamma^{\prime}} \gamma^{\prime}=0$ is a second order nonlinear ordinary differential equation, then the geodesic $\gamma=\gamma _{v}(.,p)$ is determined by its position $p$ and velocity $v$ at $p$. It is easy to check that $\|\gamma ^{\prime}\|$ is constant. We say that $ \gamma $ is {\it normalized} if $\| \gamma ^{\prime}\|=1$. The restriction of a geodesic to a  closed bounded interval is called a {\it geodesic segment}. A geodesic segment joining $p$ to $q$ in $ M$ is said to be {\it minimal} if its length equals $d(p,q)$ and this geodesic is called a {\it minimizing geodesic}. If $\gamma$ is a curve joining points $p$ and $q$ in $ M$ then, for each $t\in [a,b]$, $\nabla$ induces an linear isometry, relative to ${ \langle}\, ,\, {\rangle}$, $P_{\gamma(a)\gamma(t)}:T_{\gamma(a)}M\to T_{\gamma(t)}M$, the so-called {\it parallel transport} along $\gamma$ from $\gamma(a)$ to $\gamma(t)$. The inverse map of $P_{\gamma(a)\gamma(t)}$ is denoted by $P_{\gamma(a)\gamma(t)}^{-1}:T_{\gamma(t)}  M \to T_{\gamma(a)}M$. In the particular case  of $\gamma$ is the unique curve joining points $p$ and $q$ in $M$ then parallel transport along $\gamma$ from $p$ to $q$ is denoted by $P_{pq}:T_{p}M\to T_{q}M$.

A Riemannian manifold is {\it complete} if geodesics are defined for any
values of $t$. Hopf-Rinow's theorem asserts that if this is the case then
any pair of points, say $p$ and $q$, in $M$ can be joined by a (not
necessarily unique) minimal geodesic segment. Moreover, $( M, d)$ is a
complete metric space and bounded and closed subsets are compact. 
Take $p\in M$, the {\it exponential map} $exp_{p}:T_{p}  M \to M $ is defined by $exp_{p}v\,=\, \gamma _{v}(1,p)$. 

We denote by $R$ {\it the curvature tensor \/} defined by 
$R(X,Y)=\nabla_{X}\nabla_{Y}Z- \nabla_{Y}\nabla_{X}Z-\nabla_{[X,Y]}Z$, with $X,Y,Z\in{\cal X}(M)$, where $[X,Y]=YX-XY$. Then the {\it sectional curvature \/} with respect 
to $X$ and $Y$ is given by $K(X,Y)=\langle R(X,Y)Y , X\rangle /(||X||^{2}||Y||^{2}-\langle X\,,\,Y\rangle ^{2})$, where $||X||=\langle X,X\rangle ^{2}$.

In the subsection \ref{S:4.2} of this paper, we will be mainly interested
in Riemannian manifolds for which $K(X,Y)\geq 0$ for any $X,Y\in{\cal X}(M)$.
Such manifolds are referred to as {\it manifolds with nonnegative curvature}. A fundamental geometric property of this class of manifolds is that the
distance between geodesics issuing from one point is, at least locally,
bounded from above by the distance between the corresponding rays in the
tangent space. A global formulation of this general principle is the {\it 
law of cosines} that we now pass to describe. A {\it geodesic hinge} in $ M$ is a pair of normalized geodesics segment $\
\gamma _{1} $ and $\gamma _{2}$ such that $\gamma_{1}(0)=\gamma_{2}(0)$ and at
least one of them, say $\gamma_{1}$, is minimal. From now on $l_{1}=l(\gamma _{1})$, $
l_{2}=l(\gamma _{2})$, $l_{3}=d(\gamma_{1}(l_{1}),\gamma _{2}(l_{2}))$ and $\alpha = <\!\!\!)( \gamma _{1}^{\prime}(0),\gamma _{2}^{\prime}(0))$.
\begin{theorem}\label{law-cosines}
(Law of cosines) In a complete Riemannian manifold with nonnegative
curvature, with the notation introduced above, we have 
\begin{equation}
l_{3}^{\,2}\leq l_{1}^{\,2}+l_{2}^{\,2}- 2 l_{1} l_{2} \cos \alpha . 
\end{equation}
\end{theorem}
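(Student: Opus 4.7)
The plan is to reduce the statement to a Toponogov-type triangle comparison between the geodesic hinge in $M$ and a flat comparison triangle in $\mathbb{R}^2$. The Euclidean law of cosines already furnishes the right-hand side exactly, so the only task is to show that the third side of the hinge in $M$ is no longer than the third side of its Euclidean comparison.

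First I would build the comparison configuration. In $\mathbb{R}^2$, choose a point $o$ and two rays emanating from $o$ making angle $\alpha$; mark points $\bar q_1$ and $\bar q_2$ on these rays at distances $l_1$ and $l_2$ respectively. The Euclidean law of cosines gives
\[
\|\bar q_1-\bar q_2\|^{2}\;=\;l_{1}^{\,2}+l_{2}^{\,2}-2l_{1}l_{2}\cos\alpha.
\]
Thus the inequality to prove is exactly $d(\gamma_1(l_1),\gamma_2(l_2))\le \|\bar q_1-\bar q_2\|$.

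Next, I would invoke the hinge form of Toponogov's comparison theorem. Since $M$ is complete and has sectional curvature $K\ge 0$, and since $\gamma_1$ is a minimizing normalized geodesic while $\gamma_2$ is a normalized geodesic sharing its initial point at angle $\alpha$, Toponogov's theorem applies with model space the Euclidean plane (constant curvature $0$) and yields precisely $l_3=d(\gamma_1(l_1),\gamma_2(l_2))\le \|\bar q_1-\bar q_2\|$. Squaring and substituting the Euclidean identity above gives the stated inequality. Equality characterises the case where the hinge bounds a totally geodesic flat triangle, but this is not needed here.

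The main obstacle, and the only non-elementary input, is the hinge version of Toponogov's comparison theorem itself. A self-contained proof would rest on Rauch's comparison theorem for Jacobi fields: the second variation of arc length, combined with $K\ge 0$, shows that Jacobi fields along geodesics in $M$ grow no faster than their Euclidean counterparts, so radial distances from the vertex to points along $\gamma_2$, measured via $\gamma_1$, are dominated by the Euclidean analogue; integrating this comparison along $\gamma_2$ produces the hinge inequality. In the present paper this is a standard tool, so I would simply cite it from one of the Riemannian geometry references already listed (e.g.\ \cite{MP92,S96}) and the result then follows immediately.
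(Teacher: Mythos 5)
Your proposal is correct and matches the paper's treatment: the paper offers no argument of its own, simply deferring to \cite{MP92} and \cite{S96}, and the hinge version of Toponogov's comparison theorem combined with the Euclidean law of cosines is exactly the standard content behind those citations. Your added sketch of the reduction to a flat comparison triangle is sound and only makes explicit what the paper leaves implicit.
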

\begin{proof}
See \cite{MP92} and \cite{S96}.
\end{proof}
{\it In this paper $M$ will denote a complete n-dimensional Riemannian manifold.}

%%%%%%%%%%%%%%%%%%%%%%%%%%%%%%%%%%%%%%%
\section{The multicriteria problem}\label{sec2}
%%%%%%%%%%%%%%%%%%%%%%%%%%%%%%%%%%%%%%%
In this section we present the multicriteria problem, the first order optimality condition for it and some basic definitions related.

Let $I:=\{1,\ldots,m\}$, ${\mathbb R}^{m}_{+}=\{x\in {\mathbb R}^m: x_{i}\geq 0, j\in I\}$ and ${\mathbb R}^{m}_{++}=\{x\in {\mathbb R}^m: x_{j}> 0, j\in I \}$.
For $x, \, y \in {\mathbb R}^{m}_{+}$,
$y\succeq x$ (or $x \preceq y$) means that $y-x \in {\mathbb R}^{m}_{+}$
and $y\succ x$ (or $x \prec y$) means that
$y-x \in {\mathbb R}^{m}_{++}$.

Given a vector continuously differentiable function $F:M\to {\mathbb R}^m$, we consider the problem of finding a {\it Pareto optimum point} of F, i.e., a point $p^*\in M$ such that there exists no other $p\in M$ with $F(p)\preceq F(p^*)$ and $F(p) \neq F(p^*)$.  We denote this unconstrained problem  in the Riemannian context as
\begin{equation} \label{eq:mp}
\min_{p\in M} F(p).
\end{equation}
Let $F$ be given by $F(p):=\left(f_1(p), \ldots, f_m(p)\right)$. We denote the Riemannian jacobian of $F$ by  
\[
\grad F(p):=\left(\grad f_1(p), \ldots, \grad f_m(p) \right), \qquad p\in M,
\]
and the image of the Riemannian jacobian of $F$ at a point $p\in M$  by
\[
\mbox{Im} (\grad F(p)):=\left\{ \grad F(p)v=(\langle \grad f_1(p),v\rangle, \ldots, \langle \grad  f_m(p),v\rangle) : v\in T_pM \right\},  \qquad p\in M.
\]
Using above equality  the first order optimality condition for the problem \eqref{eq:mp} is stated as 
\begin{equation} \label{eq:oc} 
p\in M, \qquad \mbox{Im}(\grad F(p))\cap (-{\mathbb R}^{m}_{++})=\emptyset.
\end{equation}
\begin{remark}
Note that the  condition in \eqref{eq:oc}  generalizes to vector optimization the classical
condition $\grad F(p)=0$  for the scalar case, i.e.,  $m=1$. 
\end{remark}
In general, \eqref{eq:oc} is necessary, but no sufficient for optimality. So,  a point $p\in M$  satisfying  \eqref{eq:oc} is  called  { \it Pareto critical}. 
%%%%%%%%%%%%%%%%%%%%%%%%%%%%%%%%%%%%%%%%%%%%%%%%%%%%%%%%%%%%%%%%%%%
\section{Steepest descent methods for multicriteria problems}\label{sec3}
%%%%%%%%%%%%%%%%%%%%%%%%%%%%%%%%%%%%%%%%%%%%%%%%%%%%%%%%%%%%%%%%%%%
In this section we state the Riemannian steepest  descent methods for solving  multicriteria problems and establish the well definition of the sequence generated for it.

Let $p\in M$ be a point which  is not Pareto critical. Then there exists a direction $v\in T_pM$ satisfying
\[
\grad F(p)v \in -{\mathbb R}^{m}_{++},
\]
that is,  $\grad F(p)v \prec 0$. In this case, $v$ is called a {\it descent direction} for $F$ at $p$. 

For each $p\in M$,  we consider the following unconstrained optimization problem in the tangent plane $T_pM$
\begin{equation} \label{eq:dsdd}
\mathop{\min}_{v\in T_{p}M} \; \left\{\max_{i\in I}\langle\grad f_i(p),v\rangle+ (1/2)\|v\|^{2}\right\}, \quad \qquad I=\{1,\ldots,m\}.
\end{equation}
\begin{lemma}\label{dir:md1}
The unconstrained optimization problem in \eqref{eq:dsdd} has only one solution. Moreover, the vector $v$ is the solution of the problem in \eqref{eq:dsdd} if  only if there exist  $\alpha_i\geq 0$, $i\in I(p,v)$, such that 
\[
v=-\sum\limits_{i\in I(p,v)}\alpha_i\grad f_i(p), \qquad \sum\limits_{i\in I(p,v)}\alpha_i=1,
\]
where $I(p,v):=\{i\in I: \langle\grad f_i(p),v \rangle=\max_{i\in I}\langle\grad f_i(p),v \rangle\}$.
\end{lemma}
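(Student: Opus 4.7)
The plan is to prove uniqueness/existence first, and then characterize the minimizer via a standard KKT reformulation of the inner max.

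\textbf{Existence and uniqueness.} The objective
$\varphi(v):=\max_{i\in I}\langle\grad f_i(p),v\rangle+(1/2)\|v\|^{2}$
is the sum of a finite max of linear functionals (convex, continuous) and a strongly convex quadratic on the finite-dimensional inner-product space $T_{p}M$. Hence $\varphi$ is strictly (in fact strongly) convex and coercive: indeed, for any unit direction $u\in T_{p}M$, $\max_{i}\langle\grad f_i(p),tu\rangle\geq -t\max_{i}\|\grad f_i(p)\|$, so $\varphi(tu)\to+\infty$ as $|t|\to\infty$. Coercivity plus continuity gives a minimizer; strict convexity gives uniqueness.

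\textbf{Characterization via KKT.} I would rewrite \eqref{eq:dsdd} as the smooth convex program
\begin{equation*}
\min_{(v,t)\in T_pM\times\mathbb{R}}\ t+\tfrac{1}{2}\|v\|^{2}\qquad\text{s.t.}\qquad \langle\grad f_i(p),v\rangle-t\leq 0,\ i\in I.
\end{equation*}
Its feasible set has nonempty interior (take any $v$ and $t$ sufficiently large), so Slater's condition holds and the KKT conditions are necessary and sufficient for optimality. Let $\alpha_i\geq 0$, $i\in I$, be the multipliers. The stationarity conditions in the $t$-variable and the $v$-variable read
\begin{equation*}
1-\sum_{i\in I}\alpha_i=0,\qquad v+\sum_{i\in I}\alpha_i\grad f_i(p)=0,
\end{equation*}
and complementary slackness says $\alpha_i=0$ whenever $\langle\grad f_i(p),v\rangle<t=\max_{j\in I}\langle\grad f_j(p),v\rangle$, i.e.\ $\mathrm{supp}(\alpha)\subseteq I(p,v)$. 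This gives exactly the stated representation $v=-\sum_{i\in I(p,v)}\alpha_i\grad f_i(p)$ with $\sum_{i\in I(p,v)}\alpha_i=1$ and $\alpha_i\geq 0$.

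\textbf{Converse direction.} Conversely, if $v\in T_pM$ admits such a representation together with $t:=\max_{i\in I}\langle\grad f_i(p),v\rangle$, then $(v,t)$ is feasible and the $\alpha_i$'s (extended by zero outside $I(p,v)$) satisfy all KKT conditions of the smooth reformulation. By sufficiency of KKT for convex problems, $(v,t)$ minimizes $t+(1/2)\|v\|^{2}$, so $v$ solves \eqref{eq:dsdd}.

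\textbf{Expected main obstacle.} The argument is essentially a textbook convex-analysis calculation; the only mildly delicate point is justifying the KKT multiplier rule (through Slater's condition) and translating complementary slackness into the support condition $\mathrm{supp}(\alpha)\subseteq I(p,v)$. Everything else (coercivity, strict convexity, sufficiency of KKT) is routine in this finite-dimensional Hilbertian setting on $T_pM$, so no specifically Riemannian subtleties intervene — the lemma is purely about the tangent space at a single point.
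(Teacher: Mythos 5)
Your proof is correct, and for the characterization it takes a genuinely different route from the paper. For existence and uniqueness you both argue identically (finite max of linear functionals plus a strongly convex quadratic, hence a unique minimizer). For the equivalence with the convex-combination representation, the paper stays in the nonsmooth setting: it writes the optimality condition as $0\in\partial\left(\max_{i\in I}\langle\grad f_i(p),\cdot\rangle+(1/2)\|\cdot\|^2\right)(v)$, i.e.\ $-v\in\partial\left(\max_{i\in I}\langle\grad f_i(p),\cdot\rangle\right)(v)$, and then invokes the known formula for the subdifferential of a pointwise maximum (Hiriart-Urruty and Lemar\'echal, Cor.\ VI.4.3.2), which identifies that subdifferential with $\conv\{\grad f_i(p): i\in I(p,v)\}$ and yields the statement in one step. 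You instead pass to the smooth epigraph reformulation in $(v,t)$ and derive the multipliers from KKT stationarity and complementary slackness, then use sufficiency of KKT for the converse. The two are essentially dual presentations of the same fact (the max-subdifferential formula is usually proved by exactly your reformulation), but yours is self-contained, avoids any appeal to nonsmooth subdifferential calculus, and is closer to the computational form actually used to solve \eqref{eq:dsdd} in practice, at the cost of being longer; the paper's is a one-line citation. Two minor points: since all constraints in your reformulation are affine, KKT holds without needing Slater at all; and in the forward direction you should note explicitly that at an optimum of the reformulated problem $t=\max_{i\in I}\langle\grad f_i(p),v\rangle$ (otherwise $t$ could be decreased), which is what makes complementary slackness read as $\mathrm{supp}(\alpha)\subseteq I(p,v)$ — you use this but state it only implicitly.
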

\begin{proof}
Since the function 
\[
T_{p}M\ni v\mapsto\max_{i\in I} \langle\grad f_i(p),v\rangle,
\]
is the maximum of linear functions in the linear space $T_{p}M$, it is convex. So, it is easy to see that the function 
\begin{equation}\label{eq:ofup}
T_{p}M\ni v\longmapsto\max_{i\in I}\langle\grad f_i(p),v\rangle + (1/2)\|v\|^{2},
\end{equation}
is strong convex, which implies that the problem in \eqref{eq:dsdd} has only one solution in $T_pM$ and the first statement is proved. 

From convexity of the function in \eqref{eq:ofup}, it is well know that $v$ is solution of the problem in \eqref{eq:dsdd} if only if 
\[
0\in\partial \left(\max_{i\in I}\langle\grad f_i(p),\,.\,\rangle + (1/2)\|\,.\,\|^{2} \right)(v),
\]
or  equivalently,
\[
-v\in \partial \left(\max_{i\in I}\langle\grad f_i(p),\,.\,\rangle\right)(v).
\]
Therefore, the second statement follows of the formula for the subdifferential of the maximum of convex functions (see \cite{HL93}, Volume I, Corollary VI.4.3.2).
\end{proof}
\begin{lemma}\label{dir:md2}
If $p\in M$ is not Pareto critical of $F$ and  $v$ is the solution of the problem in \eqref{eq:dsdd}, then 
\[
\max_{i\in I}\langle\grad f_i(p),v\rangle + (1/2)\|v\|^{2}<0.
\]
In particular, $v$ is a descent direction.
\end{lemma}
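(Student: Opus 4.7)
The plan is to evaluate the objective of \eqref{eq:dsdd} at $v$ versus at $0$ and use uniqueness together with Lemma \ref{dir:md1} to rule out equality when $p$ is not Pareto critical.

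First I would introduce the shorthand
\[
\varphi(u):=\max_{i\in I}\langle\grad f_i(p),u\rangle + \tfrac{1}{2}\|u\|^{2}, \qquad u\in T_pM,
\]
and observe that $\varphi(0)=0$, so by optimality of $v$ we have $\varphi(v)\leq 0$. The task is to upgrade this inequality to strict. Since $\varphi$ is strictly convex (as shown in the proof of Lemma \ref{dir:md1}), it has a unique minimizer; in particular, if $\varphi(v)=0=\varphi(0)$, then $v=0$.

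Thus the key step is to show that the \emph{only} way $v=0$ can solve \eqref{eq:dsdd} is for $p$ to be Pareto critical, so that $v\neq 0$ whenever $p$ is not Pareto critical. I would apply Lemma \ref{dir:md1}: if $v=0$, there exist $\alpha_i\geq 0$ with $\sum_{i\in I(p,0)}\alpha_i=1$ and
\[
0=\sum_{i\in I(p,0)}\alpha_i\,\grad f_i(p).
\]
Taking the inner product with any $u\in T_pM$ yields $\sum_i \alpha_i\langle \grad f_i(p),u\rangle = 0$. Now suppose, for contradiction, that $\mathrm{Im}(\grad F(p))\cap(-\mathbb{R}^m_{++})\neq\emptyset$, i.e., there exists $u\in T_pM$ with $\langle\grad f_i(p),u\rangle<0$ for every $i\in I$. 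Since $\alpha_i\geq 0$ with sum $1$, at least one $\alpha_i$ is positive, and the weighted sum above becomes strictly negative, contradicting the identity $\sum_i\alpha_i\langle\grad f_i(p),u\rangle=0$. Hence $v=0$ forces $p$ to satisfy \eqref{eq:oc}, i.e., to be Pareto critical.

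Contrapositively, when $p$ is not Pareto critical we must have $v\neq 0$, hence $\varphi(v)<\varphi(0)=0$, proving the first inequality. For the second conclusion, since $(1/2)\|v\|^{2}\geq 0$, the strict inequality $\varphi(v)<0$ gives $\max_{i\in I}\langle\grad f_i(p),v\rangle<-(1/2)\|v\|^{2}\leq 0$, so $\langle\grad f_i(p),v\rangle<0$ for every $i\in I$, i.e., $\grad F(p)v\in -\mathbb{R}^m_{++}$, which is precisely the definition of a descent direction. The main (and really only) obstacle is the implication $v=0\Rightarrow p$ Pareto critical; once Lemma \ref{dir:md1} is invoked this reduces to a short linear-algebra argument.
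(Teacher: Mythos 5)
Your proof is correct, but it follows a genuinely different route from the paper's. The paper argues by exhibiting an explicit competitor: since $p$ is not Pareto critical there is $\hat v\neq 0$ with $\beta:=\max_{i\in I}\langle\grad f_i(p),\hat v\rangle<0$, and the rescaled vector $\bar v=(-\beta/\|\hat v\|^2)\hat v$ satisfies $\max_{i\in I}\langle\grad f_i(p),\bar v\rangle+(1/2)\|\bar v\|^2=-\beta^2/(2\|\hat v\|^2)<0$; optimality of $v$ then gives the strict inequality at once. You instead compare $v$ with $u=0$, note $\varphi(v)\le\varphi(0)=0$, and rule out equality by combining strict convexity (uniqueness of the minimizer, so equality forces $v=0$) with the convex-combination characterization from the second statement of Lemma \ref{dir:md1} to show that $v=0$ would make $p$ Pareto critical. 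Both arguments are sound; yours leans on the subdifferential characterization in Lemma \ref{dir:md1}, which the paper's proof of this lemma does not need, while the paper's is more self-contained and purely computational. On the other hand, your route isolates the clean equivalence ``$v(p)=0$ if and only if $p$ is Pareto critical,'' which is precisely the fact (in contrapositive form) that the convergence analysis in Theorem \ref{resconv1} later extracts from this lemma, so your organization arguably makes that downstream use more transparent. Your derivation of the ``descent direction'' conclusion from $\varphi(v)<0$ matches the paper's.
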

\begin{proof}
Since $p$ is not  Pareto critical, there exists $0\neq \hat v\in T_{p}M$ such that $ \grad F(p) \hat v\prec 0$. In particular,
\[
\beta=\max_{i\in I}\langle\grad f_i(p),\hat v\rangle<0.
\] 
As $-\beta/\|\hat v\|^2>0$, letting  $\bar{v}=(-\beta/\|\hat v\|^2)\hat v$ we obtain
\[
\max_{i\in I}\langle\grad f_i(p),\bar{v}\rangle + (1/2)\|\bar{v}\|^{2}=-\frac{\beta^2}{2\|\hat v\|^2}<0,
\]
Using that $v$  is the solution of the problem in \eqref{eq:dsdd},  the first part of the lemma  follows from last inequality. The second part of the lemma  is an immediate consequence of the first one.
\end{proof}

In view of the two previous lemmas and \eqref{eq:dsdd}  we define the steepest descent direction function for $F$ as follows.
\begin{definition}\label{def:sddf1}
The steepest descent direction  function for $F$ is defined as
\[
M\ni p\longmapsto v(p):= \argmin_{v\in T_pM} \left\{ \max_{i\in I}\left\langle\grad f_i(p),v\right\rangle + (1/2)\|v\|^{2}\right\}\in T_pM.
\]
\end{definition}

\begin{remark}
As an immediate consequence of Lemma \ref{dir:md1} it follows that the steepest descent direction for vector functions becomes the steepest descent direction when $m=1$. See, for example, \cite{XLO1998}, \cite{Munier2007}, \cite{Rap97}, \cite{S94} and \cite{U94}. In the case $M=\mathbb{R}^n$ we retrieve the steepest descent direction proposed in \cite{Benar2000}.
\end{remark}

The {\it steepest descent method with Armijo rule }  for solving the unconstrained  optimization problem \eqref{eq:mp} is as follows:

\begin{method}[Steepest descent method with Armijo rule]\label{algor1}
\hfill \vspace{.5cm}

\noindent
{\sc Initialization.} Take $\beta\in (0,\,1)$ and $p_0\in M$. Set $k=0$.\\
{\sc Stop criterion.} If $p^{k}$ is  Pareto critical STOP. Otherwise.\\
{\sc Iterative Step.} Compute the steepest descent direction $v^{k}$ for $F$ at $p^{k}$, i.e., 
\begin{equation} \label{eq:vk}
v^{k}:=v(p^{k}),
\end{equation}
and the steplength $t_k \in (0,1]$ is of the following way:
\begin{equation} \label{eq:sl}
 t_k := \max \left\{2^{-j} : j\in {\mathbb N},\, F \left(exp _{p^{k}}(2^{-j}v^k)\right)\preceq  F(p^{k})+ \beta 2^{-j}\,\grad F(p^{k})v^k\right\},
\end{equation}
and set
\begin{equation} \label{eq:xk}
p^{k+1}:=exp _{p^{k}}(t_{k}v^k),
\end{equation}
and GOTO {\sc Stop criterion}.
\end{method}
\begin{remark}
The steepest descent method for vector optimization in Riemannian manifolds  becomes the classical steepest descent method when $m=1$, which has appeared, for example, in \cite{XLO1998}, \cite{Rap97}, \cite{S94} and \cite{U94}.
\end{remark}

\begin{proposition} \label{p:wd}
The sequence $\{p^{k}\}$ generated by steepest descent method with Armijo rule is well defined.
\end{proposition}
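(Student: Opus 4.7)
The plan is to proceed by induction on $k$. The initial point $p^{0}$ is given. Assume $p^{k}$ has been constructed; I need to show that $v^{k}$, $t_{k}$ and $p^{k+1}$ are well defined. If $p^{k}$ is Pareto critical the method stops, so assume it is not. Then by Lemma \ref{dir:md1} the strongly convex problem \eqref{eq:dsdd} admits a unique minimizer, so $v^{k}=v(p^{k})$ is unambiguously defined by \eqref{eq:vk}. Moreover, by Lemma \ref{dir:md2}, $v^{k}$ is a descent direction for $F$ at $p^{k}$, i.e.
\[
\langle \grad f_{i}(p^{k}),v^{k}\rangle <0, \qquad \text{for every } i\in I.
\]

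The main content is to show that the set
\[
A_{k}:=\left\{2^{-j}: j\in\mathbb{N},\ F\bigl(exp_{p^{k}}(2^{-j}v^{k})\bigr)\preceq F(p^{k})+\beta\, 2^{-j}\grad F(p^{k})v^{k}\right\}
\]
is non-empty; once this is done, $t_{k}=\max A_{k}$ is attained because $A_{k}\subset\{2^{-j}:j\in\mathbb{N}\}$ is bounded above by $1$ and only finitely many dyadic fractions $2^{-j}$ exceed any positive lower bound, so the supremum is realized. To produce an element of $A_{k}$, for each $i\in I$ consider the scalar function
\[
\varphi_{i}:[0,1]\to\mathbb{R},\qquad \varphi_{i}(t):=f_{i}\bigl(exp_{p^{k}}(tv^{k})\bigr),
\]
which is well defined since $M$ is complete, and $C^{1}$ with $\varphi_{i}'(0)=\langle \grad f_{i}(p^{k}),v^{k}\rangle$ by the chain rule and the definition of the exponential map (its differential at $0$ is the identity on $T_{p^{k}}M$). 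A first-order Taylor expansion yields
\[
\varphi_{i}(t)-\varphi_{i}(0)=t\,\varphi_{i}'(0)+r_{i}(t), \qquad \text{with } r_{i}(t)/t\to 0 \text{ as } t\to 0^{+}.
\]
The Armijo inequality for the $i$-th component reads $\varphi_{i}(t)-\varphi_{i}(0)\leq \beta\, t\,\varphi_{i}'(0)$, which is equivalent to
\[
(1-\beta)\varphi_{i}'(0)+\frac{r_{i}(t)}{t}\leq 0.
\]
Since $(1-\beta)\varphi_{i}'(0)<0$ and $r_{i}(t)/t\to 0$, there exists $\tau_{i}>0$ such that the inequality holds for every $t\in(0,\tau_{i}]$. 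Because $I$ is finite, $\tau:=\min_{i\in I}\tau_{i}>0$, and any $t\in(0,\tau]$ satisfies every component inequality simultaneously, that is, $F(exp_{p^{k}}(tv^{k}))\preceq F(p^{k})+\beta t\grad F(p^{k})v^{k}$. Choosing $j_{0}\in\mathbb{N}$ with $2^{-j_{0}}\leq \tau$ shows $2^{-j_{0}}\in A_{k}$.

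Hence $t_{k}\in(0,1]$ exists, and $p^{k+1}=exp_{p^{k}}(t_{k}v^{k})$ is defined since $M$ is complete. By induction the whole sequence is well defined. The only delicate point is the Taylor-type argument on the manifold, but this is routine once one uses $d(exp_{p})_{0}=\mathrm{Id}$ together with the $C^{1}$ regularity of $F$; all other steps are immediate consequences of Lemmas \ref{dir:md1} and \ref{dir:md2}.
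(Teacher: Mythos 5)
Your proposal is correct and follows essentially the same route as the paper: both reduce the issue to the well definedness of the steplength and derive the Armijo inequality for all sufficiently small $t$ from the first-order behaviour of $t\mapsto F(exp_{p^{k}}(tv^{k}))$ at $t=0$, using Lemmas \ref{dir:md1} and \ref{dir:md2} for the direction. You merely spell out component-wise what the paper states as a vector limit, and add the (correct) observation that the maximum in \eqref{eq:sl} is attained.
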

\begin{proof}
Assume that $p^{k}$ is not  Pareto critical. From Definition \ref{def:sddf1} and Lemma \ref{dir:md1}, $v^k=v(p^{k})$ is well defined. Thus, for  proving the well definition of the method proposed it is enough proving well definition of the steplength. For this, first note that from Definition \ref{def:sddf1} and Lemma \ref{dir:md2} 
\[
\grad F(p^{k})v^k\prec 0.
\]
Since $F : M \to {\mathbb R}^m$ is a continuously differentiable vector function, $\grad F(p^{k})v^k\prec 0$ and $\beta\in (0, 1)$ we have
\[
\lim_{t\to 0^+}\frac{F \left(exp _{p^{k}}(tv^k)\right)-F(p^{k})}{t}=\grad F(p^{k})v^k\prec\beta \grad F(p^{k})v^k\prec 0.
\]
Therefore, it is straightforward to show that there exists $\delta\in (0, 1]$ such that 
\[
 F \left(exp _{p^{k}}(tv^k)\right)\prec  F(p^{k})+ \beta t\,\grad F(p^{k})v^k, \qquad t\in (0,\, \delta).
\]
As $\lim_{j\to \infty}2^{-j}=0$, last vector inequality implies that the  steplength \eqref{eq:sl} is well defined. Hence  $p^{k+1}$ is also well defined and the proposition is concluded.
\end{proof}

\section{Convergence analysis}\label{sec4}
In this section, following the ideas of \cite{Benar2000} we prove a partial convergence result without any additional assumption on $F$ besides the continuity differentiable. In the sequel, following \cite{Benar2005}, assuming quasi-convexity of $F$ and not-negative curvature for $M$, we extend to optimization of vector functions the full convergence result presented in \cite{XLO1998} and \cite{PP08}. It is immediate to see that, if the Method~\ref{algor1} terminates after a finite number of iterations, it terminates at a Pareto critical point. From now on,  we will assume that $\{p^{k}\}$, $\{v^k\}$ and $\{t_k\}$ are infinite sequences generated by Method~\ref{algor1}.

\subsection{Partial convergence result}
In this subsection we prove that every accumulation point of $\{p^{k}\}$ is a Pareto critical point. Before this, we prove the following preliminary fact that will be useful.

\begin{lemma}\label{lem:convv1}
The steepest descent direction function for $F$, $M\ni p\mapsto v(p) \in T_pM$, is continuous.
\end{lemma}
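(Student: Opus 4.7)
My plan is to exploit the uniqueness of the minimizer given by Lemma~\ref{dir:md1} via a standard stability-of-unique-minimizer argument, carried out with parallel transport to reconcile vectors living in different tangent spaces. I would begin with a local boundedness estimate: plugging $v=0$ as a competitor in the problem \eqref{eq:dsdd} gives
\[
\max_{i\in I}\langle \grad f_i(p), v(p)\rangle + \tfrac{1}{2}\|v(p)\|^2 \leq 0,
\]
so Cauchy--Schwarz yields $\|v(p)\|\leq 2\max_{i\in I}\|\grad f_i(p)\|$, which by continuous differentiability of $F$ is locally bounded in $p$.

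Next, to show continuity at a fixed $p\in M$, I would take an arbitrary sequence $p^{n}\to p$ and, for $n$ large enough that $p^n$ lies in a totally normal neighborhood of $p$, invoke the isometric parallel transport $P_{p^n p}:T_{p^n}M\to T_pM$ along the unique minimizing geodesic. Setting $w^n:=P_{p^n p}\,v(p^n)$, the local bound makes $\{w^n\}\subset T_pM$ bounded, so, passing to a subsequence, $w^n\to\bar v$ for some $\bar v\in T_pM$. If I can show $\bar v=v(p)$ for every such subsequential limit, continuity of $v$ at $p$ follows.

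To identify $\bar v$, I would take an arbitrary $u\in T_pM$, transport it back by $u^n:=P_{p^np}^{-1}u\in T_{p^n}M$, and use the defining minimality of $v(p^n)$:
\[
\max_{i\in I}\langle \grad f_i(p^n), v(p^n)\rangle + \tfrac{1}{2}\|v(p^n)\|^2 \leq \max_{i\in I}\langle \grad f_i(p^n), u^n\rangle + \tfrac{1}{2}\|u^n\|^2.
\]
Applying the isometry $P_{p^n p}$ inside each inner product, using $\|u^n\|=\|u\|$ and $\|v(p^n)\|=\|w^n\|\to\|\bar v\|$, and invoking continuous differentiability of $F$ so that $P_{p^n p}\grad f_i(p^n)\to \grad f_i(p)$ for each $i\in I$, I would pass to the limit (continuity of the max over a finite index set) to obtain
\[
\max_{i\in I}\langle \grad f_i(p), \bar v\rangle + \tfrac{1}{2}\|\bar v\|^2 \leq \max_{i\in I}\langle \grad f_i(p), u\rangle + \tfrac{1}{2}\|u\|^2.
\]
Since $u\in T_pM$ is arbitrary, $\bar v$ minimizes \eqref{eq:dsdd} at $p$, and uniqueness from Lemma~\ref{dir:md1} forces $\bar v=v(p)$.

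The main obstacle is purely the Riemannian bookkeeping: because $v(p^n)$ and $v(p)$ live in different linear spaces, one has to consistently pull everything back to $T_pM$ via parallel transport, using its isometric character and the continuity of the gradient vector fields $\grad f_i$. Once that scaffolding is in place, the argument collapses to the familiar fact that the unique minimizer of a parameter-dependent strongly convex function depends continuously on the parameter.
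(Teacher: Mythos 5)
Your argument is correct, but it identifies the subsequential limit by a genuinely different mechanism than the paper. Both proofs open with the same competitor-$0$ estimate $\|v(p)\|\leq 2\max_{i\in I}\|\grad f_i(p)\|$ and the same ``every subsequential limit equals $v(p)$, hence the whole sequence converges'' scheme, but the paper then invokes the second half of Lemma~\ref{dir:md1}: it writes $v(q^k)=-\sum_{i\in I(q^k,v^k)}\alpha_i^k\grad f_i(q^k)$ with multipliers summing to one, extracts convergent subsequences of the multipliers, uses a pigeonhole argument to fix a common active index set $\bar I$, verifies $\bar I\subset I(\bar q,\bar v)$ by continuity, and concludes from the sufficiency direction of that characterization; the different tangent spaces are reconciled via a local trivialization $TU_{\bar q}\approx U_{\bar q}\times\mathbb{R}^n$ rather than parallel transport. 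You instead test the limit directly against an arbitrary transported competitor $u^n=P_{p^np}^{-1}u$ and pass to the limit in the defining inequality of \eqref{eq:dsdd}, using only the uniqueness half of Lemma~\ref{dir:md1}. Your route is leaner: it avoids the multiplier and active-index bookkeeping entirely (and, incidentally, sidesteps the sign slip in the paper's displayed limit \eqref{eq:ocsp1}), and it is the standard stability argument for parametric strongly convex problems, so it generalizes beyond the max-of-linear-forms structure. The price is that you must justify that $P_{p^np}\grad f_i(p^n)\to\grad f_i(p)$, i.e., that parallel transport along minimizing geodesics in a totally normal neighborhood converges to the identity as $p^n\to p$ applied to a continuous vector field; this is standard but should be stated, whereas the paper's trivialization makes the corresponding convergence a matter of coordinates. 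The paper's approach, in exchange for the extra bookkeeping, produces the explicit representation of $\bar v$ as a convex combination of the $-\grad f_i(\bar q)$, which is reused elsewhere (e.g., in Lemma~\ref{Fejconv2}).
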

\begin{proof}
Let $\{q^k\}\subset M$ be a sequence which converges to $\bar{q}$ as $k$ goes to $+\infty$, and $U_{\bar{q}}\subset M$ a neighborhood of $\bar{q}$ such that $TU_{\bar{q}}\approx U_{\bar{q}}\times\mathbb{R}^n$. Since $\{q^k\}$ converges to $\bar{q}$ and  $TU_{\bar{q}}\subset TM$ is an open set, we assume that the whole sequence $\{(q^k,v(q^k))\}$ is in $TU_{\bar{q}}$. Define $v^{k}:=v(q^k)$. Combining Definition \ref{def:sddf1} with Lemma \ref{dir:md2} it is easy to see that 
\[
\|v^k\|\leq 2\max_{i\in I}\|\grad f_j(q^k)\|.
\]
As $F$ is continuously differentiable and $\{q^k\}$ is convergent, above inequality implies that the sequence $\{v^k\}$ is bounded. Let $\bar{v}$ be an accumulation point of the sequence $\{v^k\}$. From Definition~\ref{def:sddf1} and Lemma~\ref{dir:md1} we conclude that there exist $\alpha^{k}_i\geq 0$, $i\in I(q^k, v^{k})$, such that 
\begin{equation}\label{eq:contv1}
v^{k}=-\sum\limits_{i\in I(q^k, v^{k})}\alpha^k_i\grad f_i(q^k), \qquad \sum\limits_{i\in I(q^k, v^{k})}\alpha^{k}_i=1,\qquad k=0,1,\ldots.
\end{equation}
where $I(q^k, v^{k}):=\{i\in I: \langle\grad f_i(q^k),v^{k} \rangle=\max_{i\in I}\langle\grad f_i(q^k),v^{k} \rangle\}$. Using above constants  and the associated  indexes, define the sequence $\{\alpha^{k}\}$ as 
\[
\alpha^{k}:=(\alpha^{k}_1,\ldots,\alpha^{k}_m), \qquad \alpha^{k}_i=0, \quad i\in I\setminus I(q^k, v^{k}),\qquad k=0,1,\ldots.
\]
Let $\|\,.\,\|_{1}$ be the sum norm in $\mathbb{R}^{m}$. Since $\sum_{i\in I(q^k,v^k)}\alpha^{k}_i=1$, we have $\|\alpha^{k}\|_{1}=1$ for all $k$, which implies that the sequence $\{\alpha^{k}\}$ is bounded. Let $\bar{\alpha}$ be an accumulation point of the sequence $\{\alpha^{k}\}$. Let $\{v^{k_s}\}$ and $\{\alpha^{k_s}\}$  be  subsequences  of $\{v^{k}\}$ and $\{\alpha^{k}\}$ respectively, such that 
\[
\lim_{s  \to +\infty}v^{k_s}= \bar v, \qquad \lim_{s  \to +\infty}\alpha^{k_s}= \bar \alpha.  
\]
As the index set  $I$ is finite and $I(q^{k_s}, v^{k_s})\subset I$  for all $s$, we assume without loss of generality that
\begin{equation} \label{eq:eii}
I(q^{k_1}, v^{k_1})=I(q^{k_2}, v^{k_2})=...=\bar{I}.
\end{equation}
Hence, we conclude from \eqref{eq:contv1} and last equalities that
\[
v^{k_s}=-\sum\limits_{i\in \bar{I}}\alpha^{k_s}_i\grad f_i(q^{k_s}), \qquad \sum\limits_{i\in \bar{I}}\alpha^{k_s}_i=1,    \qquad s= 0, 1,  \ldots.
\]
Letting $s$ goes to $+\infty$ in the above equalities, we obtain
\begin{equation} \label{eq:ocsp1}
\bar{v}=\sum_{i\in\bar{I}}\bar{\alpha}_i\grad f_i(\bar{q}), \qquad \sum_{i\in\bar{I}}\bar{\alpha}_i=1. 
\end{equation}
On the other hand, $I(q^{k_s}, v^{k_s})=\{i\in I: \langle\grad f_i(q^{k_s}),v^{k_s} \rangle=\max_{i\in I}\langle\grad f_i(q^{k_s}), v^{k_s} \rangle\}$. So, equation  \eqref{eq:eii} implies that
\[
\langle\grad f_i(q^{k_s}),v^{k_s} \rangle=\max_{i\in I}\langle\grad f_i(q^{k_s}), v^{k_s}\rangle, \qquad   i\in  \bar{I},  \qquad s= 0, 1,  \ldots.
\]
Using continuity of $\grad F$ and last equality we have
\[
\langle\grad f_i(\bar q), \bar v\rangle=\max_{i\in I}\langle\grad f_i(\bar q), \bar v\rangle, \qquad   i\in  \bar{I}.
\]
From the definition of $I(\bar{q}, \bar{v})$ we obtain $\bar{I}\subset I(\bar{q}, \bar{v})$. Therefore, combining again Definition \ref{def:sddf1} with Lemma \ref{dir:md1} and \eqref{eq:ocsp1}, we conclude that $\bar{v}=v(\bar{q})$ and  the desired result is proved.
\end{proof}
In the next result, we just use that $F$ is continuously  differentiable to  assure that the sequence of the functional values of the sequence $\{p^{k}\}$, $\{F(p^{k})\}$, it is monotonous decreasing and that their accumulation points are critical Pareto.

\begin{theorem}\label{resconv1} The following statements there hold:
\begin{itemize}
	\item [i)] $\{F(p^{k})\}$ is decreasing;
	\item [ii)] Each accumulation point of the sequence $\{p^{k}\}$ is a Pareto critical point.
\end{itemize}
\end{theorem}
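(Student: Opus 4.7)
This follows in one line from the Armijo rule and Lemma~\ref{dir:md2}. By Lemma~\ref{dir:md2}, $\grad F(p^{k})v^{k}\prec 0$, so $\beta t_{k}\grad F(p^{k})v^{k}\in -\mathbb{R}^{m}_{++}$; combining with the acceptance inequality \eqref{eq:sl} gives $F(p^{k+1})\preceq F(p^{k})+\beta t_{k}\grad F(p^{k})v^{k}\prec F(p^{k})$, i.e.\ strict componentwise decrease.

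\textbf{Plan for part (ii).} I would argue by contradiction. Fix an accumulation point $\bar{p}$ with $p^{k_{j}}\to\bar{p}$ and suppose $\bar{p}$ is not Pareto critical. Then by Lemma~\ref{dir:md2} the vector $\bar{v}:=v(\bar{p})$ is nonzero and $\grad F(\bar{p})\bar{v}\prec 0$, while by Lemma~\ref{lem:convv1} we have $v^{k_{j}}\to\bar{v}$. From (i), each coordinate sequence $\{f_{i}(p^{k})\}$ is monotone decreasing and admits the convergent subsequence $f_{i}(p^{k_{j}})\to f_{i}(\bar{p})$, so the whole sequence converges to $f_{i}(\bar{p})$ and hence $f_{i}(p^{k})-f_{i}(p^{k+1})\to 0$ for every $i$. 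The componentwise version of Armijo then sandwiches
\[
0 \le -\beta t_{k}\langle\grad f_{i}(p^{k}),v^{k}\rangle \le f_{i}(p^{k})-f_{i}(p^{k+1}),
\]
forcing $t_{k}\langle\grad f_{i}(p^{k}),v^{k}\rangle\to 0$ for every $i\in I$.

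\textbf{Closing the contradiction.} Split on whether $\{t_{k_{j}}\}$ stays bounded away from zero. If $\liminf_{j}t_{k_{j}}>0$, pass to a subsequence with $t_{k_{j}}\to\tau>0$; combined with $v^{k_{j}}\to\bar{v}$, continuity of $\grad F$, and $t_{k_{j}}\langle\grad f_{i}(p^{k_{j}}),v^{k_{j}}\rangle\to 0$, this forces $\langle\grad f_{i}(\bar{p}),\bar{v}\rangle=0$ for every $i$, contradicting $\grad F(\bar{p})\bar{v}\prec 0$. The substantive case, and the main obstacle, is $t_{k_{j}}\to 0$: for $j$ large $t_{k_{j}}<1$, so the maximality in \eqref{eq:sl} says the doubled step $s_{j}:=2t_{k_{j}}$ already fails the Armijo test, i.e.\ there is some $i_{j}\in I$ with
\[
f_{i_{j}}(exp_{p^{k_{j}}}(s_{j}v^{k_{j}}))>f_{i_{j}}(p^{k_{j}})+\beta s_{j}\langle\grad f_{i_{j}}(p^{k_{j}}),v^{k_{j}}\rangle.
\]
A pigeonhole on the finite set $I$ yields a single index $i^{\ast}$ along a further subsequence. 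Dividing by $s_{j}$ and letting $j\to\infty$, the chain rule along the geodesic $t\mapsto exp_{p^{k_{j}}}(tv^{k_{j}})$ (whose initial velocity is $v^{k_{j}}$) identifies the limit of the left-hand side as $\langle\grad f_{i^{\ast}}(\bar{p}),\bar{v}\rangle$, giving $(1-\beta)\langle\grad f_{i^{\ast}}(\bar{p}),\bar{v}\rangle\ge 0$ and so contradicting $\langle\grad f_{i^{\ast}}(\bar{p}),\bar{v}\rangle<0$. The delicate point is precisely this last limit: it requires uniform control of the difference quotient of $f_{i^{\ast}}\circ exp_{p^{k_{j}}}$ as both the base point $p^{k_{j}}$ and direction $v^{k_{j}}$ vary, which is where the smoothness of the exponential map in the Riemannian setting enters, beyond what is needed in the Euclidean Fliege--Svaiter argument.
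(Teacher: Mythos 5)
Your proposal is correct and follows the same overall architecture as the paper's proof: monotone decrease from the Armijo inequality and Lemma~\ref{dir:md2} for part (i); then, for part (ii), convergence of the whole sequence $\{F(p^{k})\}$ to $F(\bar{p})$, the consequence $t_{k}\grad F(p^{k})v^{k}\to 0$, continuity of $p\mapsto v(p)$ from Lemma~\ref{lem:convv1}, and a case split on whether the stepsizes along the subsequence stay bounded away from zero. The one genuine difference is in the degenerate case $t_{k_{j}}\to 0$, and it concerns exactly the point you flag as delicate. You divide the failed Armijo inequality by the varying doubled step $s_{j}=2t_{k_{j}}$ and take a single joint limit in $j$, which does require uniform control of the difference quotient of $f_{i^{\ast}}\circ exp_{p^{k_{j}}}$ as both base point and direction move (this holds because $F$ is $C^{1}$ and $exp$ is smooth, but it is an extra uniformity argument you would have to supply). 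The paper instead fixes an arbitrary $r\in\mathbb{N}$, observes that for $s$ large $t_{k_{s}}<2^{-r}$ so the Armijo test fails at the \emph{fixed} step $2^{-r}$, passes to the limit in $s$ first (using only continuity of $F$, $\grad F$, $exp$ and Lemma~\ref{lem:convv1}) to get $f_{i_{0}}(exp_{\bar{p}}(2^{-r}v(\bar{p})))\geq f_{i_{0}}(\bar{p})+\beta 2^{-r}\langle\grad f_{i_{0}}(\bar{p}),v(\bar{p})\rangle$, and only then lets $r\to\infty$; the difference quotient is then taken at the fixed point $\bar{p}$ in the fixed direction $v(\bar{p})$, so ordinary directional differentiability suffices. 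This order-of-limits device is what the Fliege--Svaiter argument uses and it removes the uniformity issue entirely; your pigeonhole on the finite index set $I$ to fix a single $i^{\ast}$ is, on the other hand, a point where you are slightly more careful than the paper. Also note that the paper argues directly (deriving $\max_{i\in I}\langle\grad f_{i}(\bar{p}),v(\bar{p})\rangle+(1/2)\|v(\bar{p})\|^{2}=0$ and invoking Lemma~\ref{dir:md2}) rather than by contradiction, but that is only a cosmetic difference.
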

\begin{proof}
The iterative step in the Method~\ref{algor1} implies that 
\begin{equation}\label{eq:dir:md3}
F(p^{k+1})\preceq F(p^{k})+\beta t_k\grad F(p^{k})v^k, \qquad p^{k+1}=\exp_{p^{k}}t_kv^k, \qquad k=0, 1, \ldots .
\end{equation}
Since $\{p^{k}\}$ is a infinite sequence, for all $k$, $p^{k}$ is not Pareto critical of $F$. Thus, item {\it i} follows from definition of $v^k$ together with Definition \ref{def:sddf1}, Lemma~\ref{dir:md2} and last vector inequality.

Let $\bar{p}\in M$ be an accumulation point of the sequence $\{p^{k}\}$ and $\{p^{k_s}\}$ a subsequence of $\{p^{k}\}$ such that $\lim_{s \to +\infty}p^{k_s}=\bar{p}$. Since $F$ is continuous and $\lim_{s \to +\infty}p^{k_s}=\bar{p}$ we have $\lim_{s \to +\infty}F(p^{k_s})=F(\bar{p})$. So, taking into account that $\{F(p^{k})\}$ is a decreasing sequence and has $F(\bar{p})$ as an accumulation point, it is easy to  conclude that the whole sequence $\{F(p^{k})\}$ converges to $F(\bar{p})$.
Using the equation \eqref{eq:dir:md3}, Definition~\ref{def:sddf1} and Lemma~\ref{dir:md2}, we conclude that
\[
F(p^{k+1})- F(p^{k})\preceq \beta t_k\grad F(p^{k})v^k\preceq 0, \qquad k=0, 1, \ldots.
\]
Since $\lim_{s \to +\infty}F(p^{k})=F(\bar{p})$, last inequality implies that
\begin{equation}\label{eq:convfv2}
\lim_{k\to+\infty}\beta t_k\grad F(p^{k})v^k=0.
\end{equation}
As $\{p^{k_s}\}$ converges to $\bar{p}$, we  assume that $\{(p^{k_s},v^{k_s})\}\subset TU_{\bar{p}}$, where $U_{\bar{p}}$ is a neighborhood of $\bar{p}$ such that $TU_{\bar{p}}\approx U_{\bar{p}}\times\mathbb{R}^n$. Moreover, as the sequence $\{t_k\}\subset (0,1]$ has an accumulation point $\bar{t}\in [0,1]$, we assume without loss of generality that $\{t_{k_s}\}$ converges to $\bar{t}$. We have two possibilities to consider:
\begin{itemize}
	\item [{\bf a)}] $\bar{t}>0$;
	\item [{\bf b)}] $\bar{t}=0$.
\end{itemize}
Assume that the item ${\bf a}$ holds. In this case, from \eqref{eq:convfv2}, continuity of $\grad F$, \eqref{eq:vk} and Lemma \ref{lem:convv1}, we obtain
\[
\grad F(\bar{p})v(\bar{p})=0,
\]
which implies that 
\begin{equation}\label{eq:dir:md4}
\max_{i\in I}\langle\grad f_i(\bar{p}),v(\bar{p})\rangle=0.
\end{equation}
On the other hand, from Definition \ref{def:sddf1} together with Lemma \ref{dir:md2}, 
\[
\max_{i\in I}\langle\grad f_i(p^{k_s}),v^{k_s}\rangle + (1/2)\|v^{k_s}\|^{2}<0.
\]
Letting $s$ goes to $+\infty$ in the above inequalities and using Lemma \ref{lem:convv1} combined with the continuity of  $\grad F$ and equality \eqref{eq:dir:md4}, we conclude that
\[
\max_{i\in I}\langle\grad f_i(\bar{p}),v(\bar{p})\rangle + (1/2)\|v(\bar{p})\|^{2}=0.
\]  
Hence, it follows from last equality, Definition \ref{def:sddf1} and Lemma \ref{dir:md2} that $\bar{p}$ is a Pareto critical. 

Now, assume that the item ${\bf b}$ holds. Since for all $s$ $p^{k_s}$ is not a Pareto critical, we have
\[
\max_{i\in I}\langle\grad f_i(p^{k_s}),v^{k_s}\rangle\leq \max_{i\in I}\langle\grad f_i(p^{k_s}),v^{k_s}\rangle+(1/2)\|v^{k_s}\|^2<0,
\]
where the last inequality is consequence from Definition \ref{def:sddf1} together with Lemma \ref{dir:md2}. Hence, letting $s$ goes to $+\infty$ in the last inequalities,  using \eqref{eq:vk} and Lemma~\ref{lem:convv1} we obtain
\begin{equation}\label{eq:conv1}
\max_{i\in I}\langle\grad f_i(\bar{p}),v(\bar{p})\rangle\leq \max_{i\in I}\langle\grad f_i(\bar{p}),v(\bar{p})\rangle+(1/2)\|v(\bar{p})\|^2\leq0.
\end{equation}
Take $r\in\mathbb{N}$. Since $\{t_{k_s}\}$ converges to $\bar{t}=0$, we conclude that for $s$ large enough,
\[
t_{k_s}<2^{-r}.
\]
From \eqref{eq:sl} this means that the Armijo condition \eqref{eq:dir:md3} is not satisfied for $t=2^{-r}$, i.e.,
\[
F(\exp_{p^{k}}(2^{-j}v^{k_s}))\npreceq F(p^{k_s})+\beta 2^{-r}\grad F(p^{k_s})v^{k_s}, 
\]
which  means that there exists at least one $i_0\in I$ such that 
\[
f_{i_0}(\exp_{p^{k_s}}(2^{-r}v^{k_s}))> f_{i_0}(p^{k_s})+\beta 2^{-r}\langle\grad f_{i_0}(p^{k_s}),v^{k_s}\rangle.
\]
Letting $s$ goes to $+\infty$ in the above inequality,  taking into account that $\grad F$ and $\exp$ are  continuous  and using Lemma~ \ref{lem:convv1}, we obtain
\[
f_{i_0}(\exp_{\bar{p}}(2^{-r}v(\bar{p})))\geq f_{i_0}(\bar{p})+\beta 2^{-r}\langle\grad f_{i_0}(\bar{p}),v(\bar{p})\rangle.
\]
Last inequality is equivalent to
\[
\frac{f_{i_0}(\exp_{\bar{p}}(2^{-r}v(\bar{p})))-f_{i_0}(\bar{p})}{2^{-r}}\geq \beta\langle\grad f_{i_0}(\bar{p}),v(\bar{p})\rangle,
\]
which, letting $r$ goes to $+\infty$ and using that $0<\beta<1$ yields $\langle\grad f_{i_0}(\bar{p}),v(\bar{p})\rangle \geq 0$. Hence, 
\[
\max_{i\in I}\langle\grad f_i(\bar{p}),v(\bar{p})\rangle \geq 0.
\]
Combining last inequality with \eqref{eq:conv1}, we have
\[
\max_{i\in I}\langle\grad f_i(\bar{p}),v(\bar{p})\rangle + (1/2)\|v(\bar{p})\|^{2}=0.
\]  
Therefore, again from Definition \ref{def:sddf1} and Lemma \ref{dir:md2} it follows that $\bar{p}$ is a Pareto critical and the proof is concluded.
\end{proof}
\begin{remark}
If the sequence $\{p^{k}\}$ begins in a bounded level set, for example, if 
\[
L_F(F(p_0)):=\{ p\in M: F(p)\preceq F(p_0)\},
\]
is a bounded set, and being $F$ a continuous function, Hopf-Rinow's theorem assures that $L_F(F(p_0))$ is a compact set. So,  item $i$ of Theorem~\ref{resconv1}  implies that  $\{p^{k}\}\subset L_F(F(p_0))$ and consequently  $\{p^{k}\}$ is bounded. In particular,  $\{p^{k}\}$ has at least one accumulation point. Therefore, Theorem~\ref{resconv1} extends for vector optimization the results of Theorem~$5.1$ of \cite{XLO1998}. See also Remark~$4.5$ of \cite{Gabay1982}.
\end{remark}

\subsection{Full convergence}\label{S:4.2}
In this section under the quasi-convexity assumption on $F$ and not-negative curvature for $M$, full convergence of the steepest descent method is obtained.
\begin{definition}
Let $H:M\to\mathbb{R}^m$ be a vectorial function.
\begin{itemize}
\item [i)] $H$ is called convex on $M$ if for every $p,q\in M$ and every geodesic segment $\gamma:[0,1]\to M$ joining $p$ to $q$ (i.e., $\gamma(0)=p$ and $\gamma(1)=q$), it holds
\[
H(\gamma(t))\preceq (1-t)H(p)+tH(q),\qquad t\in [0,1].
\]
\item [ii)] $H$ is called quasi-convex on $M$ if for every $p,q\in M$ and every geodesic segment $\gamma:[0,1]\to M$ joining $p$ to $q$, it holds
\[
H(\gamma(t))\preceq \max\{H(p),H(q)\},\qquad t\in [0,1],
\]
where the maximum is considered  coordinate by coordinate.
\end{itemize}
\end{definition}
\begin{remark}\label{re:conv1}
The first above definition is a natural extension of the definition of convexity while the second is an extension of  a characterization of the definition of quasi-convexity, of the Euclidean space to the Riemannian context. See Definition $6.2$ and Corollary~$6.6$ of \cite{Luc1989}, pages $29$ and $31$ respectively. Note that the above definitions are equivalent, respectively, $H$ to be convex and quasi-convex along every geodesic segment. Thus, when $m=1$ these definitions  merge into the scalar convexity and quasi-convexity defined in  \cite{U94}, respectively. Moreover, it is immediate of the above definitions that if $H$ is convex then it is quasi-convex. In the case that $H$ is differentiable, convexity of $H$ implies that for every $p,q\in M$ and every geodesic segment $\gamma:[0,1]\to M$ such that $\gamma(0)=p$ and $\gamma(1)=q$,
\[
\grad H(p)\gamma'(0)\preceq H(q)-H(p).
\]

\end{remark}
\begin{proposition}\label{prop:qc10}
Let $H:M\to\mathbb{R}^m$ be a differentiable quasi-convex function. Then, for every $p,q\in M$ and every geodesic segment $\gamma:[0,1]\to M$ joining $p$ to $q$, it holds
\[
H(q)\preceq H(p)\quad \Rightarrow\quad \grad H(p)\gamma'(0)\preceq 0.
\]
\end{proposition}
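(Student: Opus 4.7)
The plan is to use the quasi-convexity definition applied to the geodesic $\gamma$, combined with a standard componentwise limit argument for the directional derivative.

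First, I would apply the definition of quasi-convexity directly: for every $t \in [0,1]$,
\[
H(\gamma(t)) \preceq \max\{H(p), H(q)\},
\]
where the maximum is taken coordinate by coordinate. The key observation is that the hypothesis $H(q) \preceq H(p)$ means $h_i(q) \leq h_i(p)$ for each $i \in I$, so the coordinatewise maximum collapses to $\max\{H(p), H(q)\} = H(p)$. Substituting back yields $H(\gamma(t)) \preceq H(p)$ for all $t \in [0,1]$, that is,
\[
H(\gamma(t)) - H(p) \preceq 0, \qquad t \in [0,1].
\]

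Next, I would pass to the limit. For each coordinate function $h_i$, since $\gamma(0)=p$, the differentiability of $h_i$ and the chain rule for the composition along the geodesic give
\[
\lim_{t \to 0^{+}} \frac{h_i(\gamma(t)) - h_i(p)}{t} = \langle \grad h_i(p), \gamma'(0) \rangle.
\]
Dividing the componentwise inequality $h_i(\gamma(t)) - h_i(p) \leq 0$ by $t > 0$ preserves the inequality, and taking $t \to 0^{+}$ yields $\langle \grad h_i(p), \gamma'(0) \rangle \leq 0$ for every $i \in I$. Reassembling over the coordinates gives $\grad H(p)\gamma'(0) \preceq 0$, which is the claim.

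There is no real obstacle here: the proof is essentially the vector-valued extension of the classical scalar argument, and the only point that requires a moment of care is the collapse of $\max\{H(p),H(q)\}$ to $H(p)$, which relies on the maximum being taken componentwise (as stipulated in the definition of quasi-convexity in the Riemannian setting). The rest is the standard passage from a finite-difference inequality to a directional-derivative inequality, done coordinate by coordinate.
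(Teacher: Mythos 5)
Your proof is correct and follows essentially the same route as the paper: apply quasi-convexity along $\gamma$, use $H(q)\preceq H(p)$ to collapse the componentwise maximum to $H(p)$, and then pass from the inequality $H(\gamma(t))\preceq H(p)$ to the directional derivative. The only difference is that you spell out the difference-quotient limit coordinate by coordinate, whereas the paper dismisses that final step as an immediate consequence of differentiability.
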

\begin{proof}
Take $p,q\in M$ such that $H(q)\preceq H(p)$ and a geodesic segment $\gamma:[0,1]\to M$ such that $\gamma(0)=p$ and $\gamma(1)=q$. Since $H$ is quasi-convex, we have
\[
H(\gamma(t))\preceq H(p),\qquad t\in[0,1].
\] 
Using last inequality  the result is an immediate consequence from the differentiability of $H$.
\end{proof}
We know that criticality is necessary condition but not sufficient for optimality. However, under convexity  of the vectorial function $F$ we proved that criticality is equivalent to the weak optimality.
\begin{definition}
A point $p^*\in M$ is a weak Pareto optimal of $F$ if there is no $p\in M$ with $F(p)\prec F(p^*)$.
\end{definition}
\begin{proposition}\label{prop:optm1}
Let $H:M\to {\mathbb R}^m$  be a convex continuously differentiable function. Then, $p\in M$ is a Pareto critical of $H$, i.e., 
\[
\mbox{Im}(\grad H(p))\cap (-{\mathbb R}^{m}_{++})=\emptyset,
\]
if and only if  $p$ is a weak Pareto optimal of $H$.
\end{proposition}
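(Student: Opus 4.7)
The plan is to prove both implications, with the ``Pareto critical $\Rightarrow$ weak Pareto optimal'' direction as the main content (using convexity) and the converse following from differentiability alone.

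For the converse direction, I would proceed by contrapositive. Suppose $p$ is not Pareto critical; then by definition there exists $v\in T_pM$ with $\grad H(p)v\prec 0$. Consider the geodesic $\gamma(t)=\exp_p(tv)$. Differentiability of $H$ gives
\[
\lim_{t\to 0^+}\frac{H(\gamma(t))-H(p)}{t}=\grad H(p)v\prec 0,
\]
so each coordinate of the left-hand side is strictly negative. Hence for $t>0$ small enough, $H(\gamma(t))-H(p)\in-\mathbb{R}^m_{++}$, i.e.\ $H(\gamma(t))\prec H(p)$, showing $p$ is not a weak Pareto optimum. Note this direction does not need convexity.

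For the forward direction, I would also argue by contrapositive. Assume $p$ is not a weak Pareto optimum, so there exists $q\in M$ with $H(q)\prec H(p)$. Since $M$ is complete, Hopf--Rinow provides a (minimal) geodesic segment $\gamma:[0,1]\to M$ with $\gamma(0)=p$ and $\gamma(1)=q$. Invoking the differentiable characterization of convexity recorded in Remark~4.6,
\[
\grad H(p)\gamma'(0)\preceq H(q)-H(p).
\]
The right-hand side lies in $-\mathbb{R}^m_{++}$ by hypothesis, and the elementary observation that $a\preceq b$ together with $b\prec 0$ forces $a\prec 0$ (compare coordinate-wise) yields $\grad H(p)\gamma'(0)\in-\mathbb{R}^m_{++}$. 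Thus $\mathrm{Im}(\grad H(p))\cap(-\mathbb{R}^m_{++})\neq\emptyset$, contradicting Pareto criticality.

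The only real subtlety is making sure the inequality chain $\grad H(p)\gamma'(0)\preceq H(q)-H(p)\prec 0$ indeed places $\grad H(p)\gamma'(0)$ in the open cone $-\mathbb{R}^m_{++}$; this is the ``order compatibility'' step that turns the weak Pareto hypothesis into a strict violation of criticality. Apart from that, the proof is a direct transcription of the Euclidean argument, and the completeness of $M$ (used to connect $p$ and $q$ by a geodesic) is the only Riemannian ingredient beyond what is already packaged in Remark~4.6.
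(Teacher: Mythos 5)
Your proof is correct and follows essentially the same route as the paper: the forward direction uses the gradient inequality for convex functions along a geodesic joining $p$ to the dominating point, and the converse uses only differentiability along $\exp_p(tv)$ for a descent direction $v$; phrasing both directions as contrapositives rather than contradictions is an immaterial difference. The order-compatibility step you flag ($a\preceq b$ and $b\prec 0$ imply $a\prec 0$) is indeed the same elementary observation the paper uses implicitly.
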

\begin{proof}
Let us suppose that $p$ is Pareto critical of $H$. Assume by contradiction that $p$ is not weak Pareto optimal of $H$. Since $p$ is not weak Pareto optimal, there exists $\tilde{p}\in M$ such that
\begin{equation}\label{eq:cdiff2}
H(\tilde{p})\prec H(p).
\end{equation}
Let $\gamma:[0,1]\to M$ be a geodesic segment joining $p$ to $\tilde{p}$ (i.e., $\gamma(0)=p$ and $\gamma(1)=\tilde{p}$). As $H$ is differentiable and convex, the last part of Remark~\ref{re:conv1} and \eqref{eq:cdiff2} imply that
\[
\grad H(p)\gamma'(0)\preceq H(\tilde{p})-H(p)\prec 0.
\]
But this contradicts the fact of $p$ to be Pareto critical of $H$, and the first part is concluded.  

Now, let us suppose that $p$ is weak Pareto optimal of $H$.  Assume by contradiction that $p$ is not Pareto critical of $H$. Since $p$ is not Pareto critical, then $\mbox{Im}(\grad H(p))\cap (-{\mathbb R}^{m}_{++})\neq\emptyset$, that is,  there exists $v\in T_pM$ a descent direction for $F$ at $p$. Hence, from the differentiability of $H$, we have
\[
\lim_{t\to 0^+}\frac{H \left(exp _{p}(tv)\right)-H(p)}{t}=\grad H(p)v\prec 0,
\]
which implies that there exists $\delta>0$ such that 
\[
H\left(exp _{p}(tv)\right)\prec  H(p)+ t\grad H(p)v, \qquad t\in (0,\, \delta).
\]
Since $v$ is a descent direction for $F$ at $p$ and $t\in (0,\, \delta)$ we have $t\grad H(p)v\prec 0$. So,   the last vector inequality yields
\[
H\left(exp _{p}(tv)\right)\prec  H(p), \qquad t\in (0,\, \delta),
\] 
contradicting the fact of $p$ to be weak Pareto optimal of $H$, which concludes the proof.
\end{proof}

\begin{definition}\label{Fejconv01}
A sequence $\{q^k\}\subset M$ is  quasi-Fej\'er convergent to a  nonempty set $U$ if, for all $p\in U$, there exists a sequence $\{\epsilon_k\}\subset\mathbb{R}_+$ such that
\[
\sum_{k=0}^{+\infty}\epsilon_k<+\infty, \qquad d^2(q^{k+1},q)\leq d^2(q^k,q)+\epsilon_k, \qquad k=0,1,\ldots.
\]
\end{definition}
In the next lemma we recall the called quasi-Fej\'er convergence theorem.
\begin{lemma}\label{Fejconv1}
Let $U\subset M$ be a nonempty set and $\{q^k\}\subset M$ a sequence quasi-Fej\'er convergent. Then, $\{q^k\}$ is bounded. Moreover, if an accumulation point $\bar{q}$ of $\{q^k\}$ belongs to $U$, then the whole sequence $\{q^k\}$ converges to $\bar{q}$ as $k$ goes to $+\infty$.
\end{lemma}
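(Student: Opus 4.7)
The plan is to transplant the standard quasi-Fej\'er argument from Euclidean space to the Riemannian setting, using only the fact that $(M,d)$ is a genuine complete metric space (by Hopf--Rinow, as recorded in Section~\ref{sec1}). Boundedness will come from a telescoping of the defining inequality, and full convergence will come from the monotone behavior of a suitably shifted scalar sequence together with the existence of a convergent subsequence.

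First I would prove boundedness. Fix an arbitrary $p\in U$; such a $p$ exists since $U\neq\emptyset$. Iterating the quasi-Fej\'er inequality $d^2(q^{k+1},p)\leq d^2(q^k,p)+\epsilon_k$ telescopes to $d^2(q^k,p)\leq d^2(q^0,p)+\sum_{j=0}^{k-1}\epsilon_j\leq d^2(q^0,p)+\sum_{j=0}^{\infty}\epsilon_j$, and the right-hand side is finite by hypothesis. Hence $\{q^k\}$ lies inside a closed metric ball centered at $p$, which is bounded in $(M,d)$.

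For the second assertion, I would take $p=\bar q\in U$ in the defining inequality and introduce $a_k:=d^2(q^k,\bar q)+\sum_{j=k}^{\infty}\epsilon_j$. A direct rearrangement shows $a_{k+1}\leq a_k$, so $\{a_k\}$ is nonincreasing and bounded below by $0$, hence convergent. Since the tail $\sum_{j=k}^{\infty}\epsilon_j\to 0$, the sequence $\{d^2(q^k,\bar q)\}$ also converges, to some $L\geq 0$. Because $\bar q$ is an accumulation point, there is a subsequence with $d(q^{k_s},\bar q)\to 0$, which forces $L=0$; the whole sequence therefore satisfies $d(q^k,\bar q)\to 0$, i.e.\ $q^k\to\bar q$.

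I do not anticipate any serious obstacle: the entire argument uses only metric-space manipulations together with the completeness of $d$, and no finer Riemannian structure (such as curvature bounds, geodesic convexity, or the exponential map) is needed. The only care point is to keep the shifted sequence $a_k$ well defined, which is guaranteed by summability of $\{\epsilon_k\}$.
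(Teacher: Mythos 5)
Your argument is correct and is exactly the one the paper intends: the paper's proof of this lemma is a one-line reference to Theorem~1 of Burachik et al.\ \cite{Bur1995}, stating that the Euclidean quasi-Fej\'er argument carries over verbatim once the Euclidean distance is replaced by the Riemannian distance $d$, which is precisely the telescoping-plus-shifted-monotone-sequence proof you wrote out. No gap; you have simply made explicit what the paper leaves to the cited reference.
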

\begin{proof}
Analogous to the proof of Theorem~$1$ in Burachik et al. \cite{Bur1995}, by  replacing the Euclidean distance by the Riemannian distance $d$.
\end{proof}
Consider the following set 
\begin{equation}\label{eq:qc1}
U:=\{p\in M: F(p)\preceq F(p^{k}),\;\; k=0,1, \ldots\}.
\end{equation}
In general, the above set may be an empty set. To guarantee that $U$ is nonempty, an additional assumption on the sequence $\{p^{k}\}$ is needed. In the next remark we give a such condition.
\begin{remark}
If  the sequence $\{p^{k}\}$ has an accumulation point, then  $U$ is nonempty. Indeed, let $\bar{p}$ be an accumulation point of the sequence $\{p^{k}\}$. Then, there exists a subsequence $\{p^{k_j}\}$ of $\{p^{k}\}$ which converges to $\bar{p}$. Since $F$ is continuous $\{F(p^{k})\}$ has $F(\bar{p})$ as an accumulation point. Hence, using that $\{F(p^{k})\}$ is a decreasing sequence (see item i of Theorem~\ref{resconv1})  usual arguments show easily  that the whole sequence  $\{F(p^{k})\}$ converges to $F(\bar{p})$  and there holds
\[
F(\bar{p})\preceq F(p^{k}),\qquad  k=0,1, \ldots,
\]
which implies that $\bar{p}\in U$, i.e., $U\neq\emptyset$.
\end{remark}

In the next lemma we presented the main result of this section. It is fundamental in the proof of the global convergence result of the sequence $\{p^{k}\}$.

\begin{lemma}\label{Fejconv2}
Suppose that $F$ is quasi-convex, $M$ has not-negative curvature and $U$, defined in \eqref{eq:qc1}, is nonempty. Then, for all $\tilde{p}\in U$, the inequality there holds:
\[
d^2(p^{k+1},\tilde{p})\leq d^2(p^{k},\tilde{p})+t_k^2\|v^k\|^2.
\]  
\end{lemma}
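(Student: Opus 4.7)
The plan is to apply the law of cosines (Theorem~\ref{law-cosines}) to a suitable geodesic hinge at $p^k$, and show that the relevant angle is non-obtuse so the cross-term is non-positive. Fix $\tilde p\in U$ and let $\gamma_1:[0,l_1]\to M$ be a normalized minimizing geodesic from $p^k$ to $\tilde p$, so $l_1=d(p^k,\tilde p)$. Let $\gamma_2$ be the normalization of the geodesic $s\mapsto \exp_{p^k}(s v^k)$, $s\in[0,t_k]$, so that $\gamma_2(0)=p^k$, $\gamma_2(l_2)=p^{k+1}$ with $l_2=t_k\|v^k\|$, and $\gamma_2'(0)=v^k/\|v^k\|$. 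Since $\gamma_1$ is minimal, $(\gamma_1,\gamma_2)$ is a geodesic hinge, and Theorem~\ref{law-cosines} yields
\[
d^2(p^{k+1},\tilde p)\;\leq\;d^2(p^k,\tilde p)+t_k^2\|v^k\|^2-2\,d(p^k,\tilde p)\,t_k\|v^k\|\cos\alpha,
\]
where $\alpha=\sphericalangle(\gamma_1'(0),\gamma_2'(0))$.

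Hence the claim reduces to checking $\cos\alpha\geq 0$, i.e.\ $\langle \gamma_1'(0),v^k\rangle\geq 0$. By Lemma~\ref{dir:md1}, there exist scalars $\alpha_i\geq 0$ with $\sum_{i\in I(p^k,v^k)}\alpha_i=1$ such that $v^k=-\sum_{i\in I(p^k,v^k)}\alpha_i\grad f_i(p^k)$, and therefore
\[
\langle \gamma_1'(0),v^k\rangle=-\sum_{i\in I(p^k,v^k)}\alpha_i\,\langle\grad f_i(p^k),\gamma_1'(0)\rangle.
\]
So it suffices to show $\langle\grad f_i(p^k),\gamma_1'(0)\rangle\leq 0$ for every $i$.

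This is exactly where the assumptions on $F$ and $\tilde p$ enter. By definition of $U$ in \eqref{eq:qc1}, $F(\tilde p)\preceq F(p^k)$, and since $F$ is differentiable and quasi-convex, Proposition~\ref{prop:qc10} applied to the geodesic $\gamma_1$ joining $p^k$ to $\tilde p$ gives $\grad F(p^k)\gamma_1'(0)\preceq 0$, that is, $\langle\grad f_i(p^k),\gamma_1'(0)\rangle\leq 0$ for each $i\in I$. Substituting back yields $\langle\gamma_1'(0),v^k\rangle\geq 0$, hence $\cos\alpha\geq 0$, and the law-of-cosines bound collapses to the desired inequality.

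The only mildly delicate step is the setup of the hinge (choosing $\gamma_1$ to be the minimal geodesic provided by Hopf--Rinow so that Theorem~\ref{law-cosines} applies, and correctly normalizing $\gamma_2$ so that $l_2=t_k\|v^k\|$ and $\gamma_2'(0)$ is parallel to $v^k$); once that bookkeeping is in place, the rest is a direct combination of the convex-combination structure of $v^k$ from Lemma~\ref{dir:md1} with the quasi-convex gradient inequality from Proposition~\ref{prop:qc10}.
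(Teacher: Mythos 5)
Your proof is correct and follows essentially the same route as the paper: set up the geodesic hinge at $p^k$ with the minimal geodesic to $\tilde p$ and the geodesic through $p^{k+1}$, apply the law of cosines, and kill the cross-term by combining the convex-combination representation of $v^k$ from Lemma~\ref{dir:md1} with the quasi-convex gradient inequality of Proposition~\ref{prop:qc10}. If anything, your bookkeeping of the hinge (normalizing $\gamma_2$ so that $l_2=t_k\|v^k\|$) is slightly more careful than the paper's, whose intermediate display accidentally drops the $t_k^2\|v^k\|^2$ term.
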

\begin{proof}
Consider the geodesic hinge $(\gamma_1,\gamma_2,\alpha)$, where $\gamma_1$ is a normalized minimal geodesic segment joining $p^{k}$ to $\tilde{p}$, $\gamma_2$ is the geodesic segment joining $p^{k}$ to $p^{k+1}$ such that $\gamma_2'(0)=t_kv^k$ and $\alpha=\angle(\gamma'_1(0),v^k)$. By the law of cosines (Theorem \ref{law-cosines}), we have
\[
d^2(p^{k+1},\tilde{p})\leq d^2(p^{k},\tilde{p})+t_k^2\|v^k\|^2-2d(p^{k},p)t_k\|v^k\|\cos\alpha,\qquad k=0,1,\ldots.
\]
Thus, taking into account that $\cos(\pi-\alpha)=-\cos\alpha$ and $\langle-v^k,\gamma_1'(0)\rangle=\|v^k\|\cos(\pi-\alpha)$, above vector inequality becomes
\[
d^2(p^{k+1},\tilde{p})\leq d^2(p^{k},\tilde{p})+2d(p^{k},\tilde{p})t_k\langle -v^k,\gamma_1'(0)\rangle,\qquad k=0,1,\ldots.
\]
On the other hand, from \eqref{eq:vk}, Definition \ref{def:sddf1} and Lemma \ref{dir:md1}, there exist $\alpha^{k}_i\geq 0$, with $i\in I_k:=I(p^{k},  v^k)$, such that 
\[
v^{k}=-\sum\limits_{i\in  I_k}\alpha_i\grad f_i(p^{k}), \qquad \sum\limits_{i\in  I_k}\alpha^{k}_i=1,\qquad k=0,1,\ldots.
\]
Hence, last vector inequality yields 
\begin{equation}\label{eq:contv10}
d^2(p^{k+1},\tilde{p})\leq d^2(p^{k},\tilde{p})+2d(p^{k},\tilde{p})t_k\sum_{i\in  I_k}\alpha^k_i\langle \grad f_i(p^{k}),\gamma_1'(0)\rangle,\quad k=0,1,\ldots.
\end{equation}
Since $F$ is quasi-convex and $\tilde{p}\in U$, from Proposition \ref{prop:qc10} with $H=F$, $p=p^{k}$, $q=\tilde{p}$ and $\gamma=\gamma_1$, we have
\[
\grad F(p^{k})\gamma_1'(0)\preceq 0, \qquad k=0,1,\ldots,
\]
or equivalently, 
\begin{equation}\label{eq:contv11}
\langle\grad f_i(p^{k}),\gamma_1'(0) \rangle\leq 0, \qquad i=1, \ldots, m,  \quad k=0,1,\ldots.
\end{equation}
Therefore, for combining \eqref{eq:contv10} with \eqref{eq:contv11}, the lemma follows. 
\end{proof}

\begin{proposition} \label{pr:cf}
If $F$ is quasi-convex, $M$ has not-negative curvature and $U$, defined in \eqref{eq:qc1}, is a nonempty set, then the sequence $\{p_k\}$ is Fej\'er convergent to $U$.
\end{proposition}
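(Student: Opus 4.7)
The plan is to verify Definition \ref{Fejconv01} directly, with error sequence $\epsilon_k:=t_k^2\|v^k\|^2\geq 0$. The Fej\'er-type recursion
\[
d^2(p^{k+1},\tilde p)\leq d^2(p^k,\tilde p)+t_k^2\|v^k\|^2,\qquad \tilde p\in U,
\]
is already provided by Lemma \ref{Fejconv2}, so the only remaining task is summability, $\sum_{k=0}^{\infty}\epsilon_k<+\infty$. Because $t_k\in(0,1]$ implies $t_k^2\leq t_k$, it is enough to prove $\sum_{k=0}^{\infty} t_k\|v^k\|^2<+\infty$.

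To produce that bound I would couple the Armijo rule \eqref{eq:sl} with Lemma \ref{dir:md2}. For each $k$, pick an index $i_k\in I$ attaining $\max_{i\in I}\langle\grad f_i(p^k),v^k\rangle$; Lemma \ref{dir:md2} then gives $\langle\grad f_{i_k}(p^k),v^k\rangle<-\tfrac{1}{2}\|v^k\|^2$, while the $i_k$-th coordinate of the Armijo inequality yields
\[
f_{i_k}(p^k)-f_{i_k}(p^{k+1})\;\geq\;-\beta\, t_k\langle\grad f_{i_k}(p^k),v^k\rangle\;>\;\tfrac{\beta}{2}\,t_k\|v^k\|^2.
\]
This is where the assumption $U\neq\emptyset$ comes in. Because every component of $\grad F(p^k)v^k$ is strictly negative (again by Lemma \ref{dir:md2}, since $v^k$ is a descent direction), the componentwise Armijo rule forces each scalar sequence $\{f_i(p^k)\}_{k}$ to be strictly decreasing; any $\tilde p\in U$ supplies the uniform lower bound $f_i(\tilde p)\leq f_i(p^k)$, so each $\{f_i(p^k)\}_{k}$ converges in $\mathbb R$. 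Dominating the index-dependent decrement by the sum over all coordinates (which is legitimate since each $f_i(p^k)-f_i(p^{k+1})\geq 0$) and swapping summations gives
\[
\sum_{k=0}^{\infty}\tfrac{\beta}{2}\,t_k\|v^k\|^2\;\leq\;\sum_{i\in I}\sum_{k=0}^{\infty}\bigl[f_i(p^k)-f_i(p^{k+1})\bigr]\;=\;\sum_{i\in I}\Bigl[f_i(p^0)-\lim_{k\to\infty}f_i(p^k)\Bigr]\;<\;+\infty.
\]
Thus $\sum_k t_k\|v^k\|^2<+\infty$, hence $\sum_k\epsilon_k<+\infty$, and Definition \ref{Fejconv01} is fulfilled.

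The main obstacle I anticipate is precisely this last step, namely converting the vector-valued Armijo decrease into one scalar summable quantity controlling $t_k\|v^k\|^2$ from below: in the scalar case $m=1$ it reduces to the classical telescoping argument used in \cite{XLO1998,PP08}, and the dominant-index device above is the natural extension to a finite number of objectives. Once that estimate is in hand, the proposition follows immediately by combining it with Lemma \ref{Fejconv2}.
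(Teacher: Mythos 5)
Your proposal is correct and takes essentially the same route as the paper: Lemma~\ref{Fejconv2} supplies the Fej\'er-type recursion, and the Armijo rule combined with Lemma~\ref{dir:md2} gives summability of $t_k^2\|v^k\|^2$ through a telescoping scalar quantity that is bounded below because $U\neq\emptyset$. The only cosmetic difference is the choice of telescoping scalar: the paper applies the sublinear, monotone function $\varphi(y)=\max_{i\in I}y_i$ to the vector Armijo inequality and telescopes $\max_{i\in I}f_i(p^k)$, whereas you work with the maximizing index $i_k$ at each iteration and then dominate its decrement by $\sum_{i\in I}\bigl[f_i(p^k)-f_i(p^{k+1})\bigr]$, telescoping the sum of the (individually decreasing, bounded below) coordinates --- both arguments are valid and yield the same conclusion.
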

\begin{proof}
For simplify the notation define the scalar function  $\varphi:\mathbb{R}^m\to\mathbb{R}$ as follows
\[
\varphi(y)=\max_{i\in I}\langle y,e_i\rangle, \qquad I=\{ 1,  \ldots, m \}.
\]
where $\{e_i\}\subset \mathbb{R}^m$ is the canonical base of the space $\mathbb{R}^m$. It easy to see that the following properties on the function $\varphi$ hold:
\begin{equation}\label{eq:prop1}
\varphi(x+y)\leq\varphi(x)+\varphi(y),\qquad\varphi(tx)=t\varphi(x), \qquad x,y\in\mathbb{R}^m, \quad t\geq 0.
\end{equation}
\begin{equation}\label{eq:prop2}
x\preceq y\quad \Rightarrow \quad  \varphi(x)\leq\varphi (y),\qquad x,y\in\mathbb{R}^m.
\end{equation}
From the definition of $t_k$ in \eqref{eq:sl} and $p^{k+1}$ in \eqref{eq:xk}, we have
\[
F(p^{k+1})\preceq F(p^{k})+\beta t_k\grad F(p^{k})v^k,\qquad k=0,1\ldots.
\]
Hence, using  \eqref{eq:prop1}, \eqref{eq:prop2} and last inequality, we obtain 
\begin{equation}\label{eq:prop3}
\varphi(F(p^{k+1}))\leq\varphi(F(p^{k}))+\beta t_k\varphi(\grad F(p^{k})v^k),\qquad k=0,1\ldots.
\end{equation}
On the other hand, combining definition of $v^k$ in \eqref{eq:vk}, Definition \ref{def:sddf1}, Lemma \ref{dir:md2} and definition of $\varphi$, we conclude that
\[
\varphi(\grad F(p^{k})v^k) + (1/2)\|v^k\|^{2}<0, \qquad k=0,1\ldots,
\]
which together with \eqref{eq:prop3} implies that
\[
\varphi(F(p^{k+1}))<\varphi(F(p^{k}))-(\beta t_k/2)\|v^k\|^2,\qquad k=0,1\ldots,
\]
But this tells us that,
\[
t_k\|v^k\|^2<2[\varphi(F(p^k))-\varphi(F(p^{k+1}))]/\beta, \qquad k=0,1\ldots.
\]
As $t_k\in (0,1]$, follows that
\[
t_k^2\|v^k\|^2<2[\varphi(F(p^k))-\varphi(F(p^{k+1}))]/\beta, \qquad k=0,1\ldots.
\]
Thus, the latter inequality implies easily that  
\[
\sum\limits_{k=0}^{n}t_k^2\|v^k\|^2< 2\left[\varphi(F(p^0)-\varphi(F(p^{n+1}))\right]/\beta,\quad n>0.
\]
Take $\bar{p}\in U$. Then, $F(\bar{p})\preceq F(p^{n+1})$. So, from \eqref{eq:prop2} $\varphi(F(\bar{p}))\leq \varphi(F(p^{n+1}))$ and last inequality yields
\[
\sum\limits_{k=0}^{n}t_k^2\|v^k\|^2< 2\left(\varphi(F(p_0)-\varphi(F(\bar{p}))\right)/\beta.
\]
which implies that $\{t_k^2\|v^k\|^2\}$ is a summable sequence.
Therefore, the desired result follows from Lemma \ref{Fejconv2} combined with Definition \ref{Fejconv01}.
\end{proof}
\begin{theorem}
If $F$ is quasi-convex, $M$ has not-negative curvature and $U$, defined in \eqref{eq:qc1}, is a nonempty set, then the sequence $\{p_k\}$ converges to a Pareto critical of $F$.
\end{theorem}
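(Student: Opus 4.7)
The plan is to combine Proposition~\ref{pr:cf} (quasi-Fej\'er convergence to $U$), the quasi-Fej\'er convergence theorem (Lemma~\ref{Fejconv1}), and the partial convergence result (Theorem~\ref{resconv1}) into a single clean sweep. First I would invoke Proposition~\ref{pr:cf} to conclude that $\{p^{k}\}$ is quasi-Fej\'er convergent to the nonempty set $U$, and then apply the first half of Lemma~\ref{Fejconv1} to get that $\{p^{k}\}$ is bounded. By Hopf--Rinow (completeness of $M$) the sequence lies in a compact set once it is bounded, so it must have at least one accumulation point $\bar p\in M$.

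Next I would use Theorem~\ref{resconv1}(ii) to conclude that any such accumulation point $\bar p$ is a Pareto critical of $F$. The main point to argue, and in my view the only step that is not completely mechanical, is that $\bar p$ actually lies in the set $U$ defined in \eqref{eq:qc1}; once that is established, the second half of Lemma~\ref{Fejconv1} applies and yields convergence of the entire sequence $\{p^{k}\}$ to $\bar p$, which together with Pareto criticality finishes the proof.

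To show $\bar p\in U$, I would use the coordinate-wise version of the argument already given in the remark preceding Lemma~\ref{Fejconv2}. Let $\{p^{k_s}\}$ be a subsequence with $p^{k_s}\to \bar p$. By continuity of each coordinate $f_i$, we have $f_i(p^{k_s})\to f_i(\bar p)$ as $s\to+\infty$. By Theorem~\ref{resconv1}(i), the sequence $\{F(p^{k})\}$ is decreasing in the partial order $\preceq$, hence each scalar sequence $\{f_i(p^{k})\}$ is monotone decreasing in $\mathbb{R}$; a monotone real sequence with a convergent subsequence converges, so $f_i(p^{k})\to f_i(\bar p)$ for every $i\in I$, and by monotonicity $f_i(\bar p)\le f_i(p^{k})$ for all $k$. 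This is exactly $F(\bar p)\preceq F(p^{k})$ for all $k$, i.e., $\bar p\in U$.

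With $\bar p$ an accumulation point of $\{p^{k}\}$ lying in $U$, Lemma~\ref{Fejconv1} gives $p^{k}\to \bar p$. Since $\bar p$ is Pareto critical by Theorem~\ref{resconv1}(ii), the sequence $\{p^{k}\}$ converges to a Pareto critical of $F$, as claimed. The only genuine obstacle in this outline is the verification $\bar p\in U$, and as shown above it is handled by a monotone-plus-subsequential-limit argument applied coordinate-wise, so no new analytic input beyond what has already been established in the preceding sections is required.
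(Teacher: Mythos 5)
Your proposal is correct and follows essentially the same route as the paper's own proof: quasi-Fej\'er convergence to $U$ via Proposition~\ref{pr:cf}, boundedness and an accumulation point $\bar p$ via Lemma~\ref{Fejconv1} and Hopf--Rinow, membership $\bar p\in U$ from the monotonicity of $\{F(p^{k})\}$ and continuity of $F$, full convergence from the second half of Lemma~\ref{Fejconv1}, and Pareto criticality of the limit from Theorem~\ref{resconv1}(ii). The only difference is that you spell out the coordinate-wise monotone-plus-subsequential-limit argument for $\bar p\in U$, which the paper states more briefly.
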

\begin{proof}
From  Proposition~\ref{pr:cf}, $\{p^{k}\}$ is Fej\'er convergent to $U$. Thus Lemma \ref{Fejconv1} guarantees that $\{p^{k}\}$ is bounded and, from Hopf-Rinow' theorem, there exists $\{p^{k_s}\}$, subsequence of $\{p^{k}\}$, which converges to $\bar{p}\in M$ as $s$ goes to $+\infty$. Since $F$ is continuous and $\{F(p^{k})\}$ is a decreasing sequence (see item $i$ of Theorem \ref{resconv1}), we conclude that $F(p^{k})$ converges to $F(\bar{p})$ as $k$ goes to $+\infty$, which implies that 
\[
F(\bar{p})\preceq F(p^{k}),\qquad k=0,1,\ldots,
\]
i.e., $\bar{p}\in U$. Hence, from Lemma \ref{Fejconv1} we conclude that the whole sequence $\{p^{k}\}$ converges to $\bar{p}$ as $k$ goes to $+\infty$, and the conclusion of the proof it is consequence of the item $ii$ of Theorem~ \ref{resconv1}.
\end{proof}
\begin{corollary}\label{res:fconv100}
If $F$ is convex, $M$ has not-negative curvature and $U$, defined in \eqref{eq:qc1}, is a nonempty set, then the sequence $\{p_k\}$ converges to a weak Pareto optimal of $F$.
\end{corollary}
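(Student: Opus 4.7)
The plan is to derive this corollary by combining the preceding theorem with Proposition~\ref{prop:optm1}, since the corollary differs from the theorem only in strengthening the hypothesis from quasi-convexity to convexity and strengthening the conclusion from Pareto critical to weak Pareto optimal.

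First, I would invoke Remark~\ref{re:conv1}, which explicitly states that convexity implies quasi-convexity for vectorial functions. So the hypotheses of the immediately preceding theorem are satisfied: $F$ is quasi-convex, $M$ has nonnegative curvature, and $U$ is nonempty. Applying that theorem directly yields that the sequence $\{p^k\}$ converges to some limit $\bar{p} \in M$ which is a Pareto critical point of $F$.

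Next, I would upgrade Pareto criticality to weak Pareto optimality using Proposition~\ref{prop:optm1}. That proposition requires $F$ to be convex and continuously differentiable; both hold by hypothesis (the standing assumption throughout the paper is that $F$ is continuously differentiable, and convexity is hypothesized in the corollary). Since $\bar{p}$ is a Pareto critical of $F$, Proposition~\ref{prop:optm1} immediately gives that $\bar{p}$ is a weak Pareto optimal of $F$, which is the desired conclusion.

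There is no real obstacle here: the corollary is essentially a one-line composition of two results already established in the paper, exploiting the chain ``convex $\Rightarrow$ quasi-convex'' (Remark~\ref{re:conv1}) on the hypothesis side and ``Pareto critical $\Leftrightarrow$ weak Pareto optimal under convexity'' (Proposition~\ref{prop:optm1}) on the conclusion side. The proof should therefore be only a few lines long, citing the preceding theorem, Remark~\ref{re:conv1}, and Proposition~\ref{prop:optm1}.
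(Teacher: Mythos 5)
Your proof is correct and follows exactly the paper's own argument: convexity implies quasi-convexity via Remark~\ref{re:conv1}, the preceding theorem gives convergence to a Pareto critical point, and Proposition~\ref{prop:optm1} upgrades criticality to weak Pareto optimality under convexity. No differences worth noting.
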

\begin{proof}
Since $F$ is convex, in particular, it is quasi-convex (see Remark \ref{re:conv1}). Thus the corollary is a consequence of the previous theorem and Proposition \ref{prop:optm1}.
\end{proof}
%%%%%%%%%%%%%%%%%%%%%%%%%%%%%%%%%%%%%%%%%%%%
\section{Examples}\label{sec5}
%%%%%%%%%%%%%%%%%%%%%%%%%%%%%%%%%%%%%%%%%%%%
In this section we present some examples of complete Riemannian manifolds with explicit geodesic curves and the steepest descent iteration of the sequence generated by the method~\ref{algor1}.
We recall  that $F:M\to\mathbb{R}^m$, $F(p):=(f_1(p),\ldots,f_m(p))$, is a differentiable function. If $(M,G)$ is a Riemannian manifold then the Riemannian gradient of $f_i$ is given by $\grad f_i(p)=G(p)^{-1}f_i'(p)$, $i\in I:=\{1,\ldots,n\}$. Hence, if $v(p)$ is the steepest descent direction for $F$ at $p$ (see Definition~\ref{def:sddf1}), from Lemma~\ref{dir:md1}, there exist constants $\alpha_i\geq 0$, $i\in I(p,v)$, such that 
\begin{equation}\label{eq:exsdm2} 
v=-\sum\limits_{i\in I(p,v)}\alpha_iG(p)^{-1}f'_i(p), \qquad \sum\limits_{i\in I(p,v)}\alpha_i=1,
\end{equation}
where $I(p,v):=\{i\in I: \langle G(p)^{-1}f'_i(p),v \rangle=\max_{i\in I}\langle G(p)^{-1}f'_i(p),v \rangle\}$.

%%%%%%%%%%%%%%%%%%%%%%%%%%%%%%%%%%%%%%%%%%%%%%%%%%%%%%%%%%%%%%
\subsection{A steepest descent method for $\mathbb{R}_{++}^n$}
%%%%%%%%%%%%%%%%%%%%%%%%%%%%%%%%%%%%%%%%%%%%%%%%%%%%%%%%%%%%%%
Let $M$ be the positive octant, $\mathbb{R}_{++}^n$, endowed with the Riemannian metric 
\[
M\ni v\mapsto G(p)=P^{-2}:=\diag\left(p_1^{-2},\ldots,p_n^{-2}\right),
\]
(metric induzed by the Hessian of the logarithmic barrier). Since $(M,G)$ is isometric to the Euclidean space endowed with the usual metric (see, Da Cruz Neto et al. \cite{XFLN2006}) it follows that $M$ has constant curvature equal to zero. On the other hand, it is easy to see that the unique geodesic $p=p(t)$ such that $p(0)=p^0=(p^0_1,\ldots,p^0_n)$ and $p'(0)=v^0=(v^0_1,\ldots,v^0_n)$ is given by $p(t)=(p_1(t),\ldots,p_n(t))$, where
\begin{equation}\label{eq:exsdm1}
p_j(t)=p^0_j{\rm e}^{\left(v^0_j/p^0_j\right)t}, \qquad j=1,\ldots,n.
\end{equation}
So, we conclude that $(M,G)$ is also complete. In this case, from \eqref{eq:exsdm1} and \eqref{eq:exsdm2}, there exist $\alpha_i^k\geq 0$ such that the steepest descent iteration of the sequence generated by the method~\ref{algor1} is given by 
\[
p^{k+1}_j=p^{k}_j{\rm e}^{\left(v^k_j/p^{k}_j\right)t_k}, \quad v^k_j=-\sum_{i\in I(p^k, v^k)}\alpha^k_i (p^{k}_j)^2\frac{\partial f_i}{\partial p_j}(p^{k}),\quad \sum_{i\in I(p^k, v^k)}\alpha^k_i=1,\quad i=1,\ldots,n. 
\]
%%%%%%%%%%%%%%%%%%%%%%%%%%%%%%%%%%%%%%%%%%%%%%%%%%%%%%%%
\subsection{A steepest descent method for the hypercube}
%%%%%%%%%%%%%%%%%%%%%%%%%%%%%%%%%%%%%%%%%%%%%%%%%%%%%%%%
Let $M$ be the hypercube $(0,1)\times \ldots \times (0,1)$ endowed with the Riemannian metric
\[
M\ni v\mapsto G(p)=P^{-2}(I-P)^{-2}:=\diag\left((p_1)^2(1-p_1)^2,\ldots,(p_n)^2(1-p_n)^2\right),
\]
(metric induzed by the Hessian of the barrier $b(p)=\sum_{i=1}^n(2p_i-1)\big(\ln
p_i-\ln(1-p_i)\big)$. The Riemannian manifold $(M,G)$ is complete and the  geodesic $p=p(t)$, satisfying $p(0)=p^0=(p^0_1,\ldots,p^0_n)$ and $p'(0)=v^00=(v^0_1,\ldots,v^0_n)$, is given by $p(t)=(p_1(t),\ldots,p_n(t))$,
\begin{equation}\label{mrsc11}
p_j(t)=\left(1/2\right)\left[1+ \tanh \left(\left(1/2\right)\dfrac{v_j}{p_j(1-p_j)}t+\left(1/2\right)\ln\left(\dfrac{p_j}{1-p_j}\right)\right)\right],\qquad j=1,...,n,
\end{equation}
where $\tanh(z):=(e^z-e^{-z})/(e^z+e^{-z})$. Moreover $(M,G)$ has constant curvature equal to zero, see Theorem 3.1 and 3.2 of \cite{PP2007}. In this case, from \eqref{mrsc11} and \eqref{eq:exsdm2}, there exist $\alpha_i^k\geq 0$ such that the steepest descent iteration of the sequence generated by the method~\ref{algor1} is given by   
\[
p^{k+1}_j=\left(1/2\right)\left[1+ \tanh \left(\left(1/2\right)\dfrac{v^k_j}{p^{k}_j(1-p^{k}_j)}t_k+\left(1/2\right)\ln\left(\dfrac{p^{k}_j}{1-p^{k}_j}\right)\right)\right],\qquad j=1,...,n,
\]
with,
\[
v^k_j=-\sum_{i\in I(p^k, v^k)}\alpha^k_i (p^{k}_j)^2(1-p^{k}_j)^2\frac{\partial f_i}{\partial p_j}(p^{k}),\quad \sum_{i\in I(p^k, v^k)}\alpha^k_i=1,\qquad j=1,\ldots,n. 
\]
%%%%%%%%%%%%%%%%%%%%%%%%%%%%%%%%%%%%%%%%%%%%%%%%%%%%%%%
\subsection{steepest descent method for the cone of positive semidefinite matrices}
%%%%%%%%%%%%%%%%%%%%%%%%%%%%%%%%%%%%%%%%%%%%%%%%%%%%%%%
Let ${\mathbb S}^{n}$ be the set
of the symmetric matrices  $n\times n$, ${\mathbb S}^{n}_{+}$ the cone of
the symmetric positive semi-definite matrices and ${\mathbb
S}^{n}_{++}$ the cone of the symmetric positive definite
matrices. Following Rothaus \cite{Rot1960}, let $M={\mathbb S}^n_{++}$ be endowed with the Riemannian metric induced by the Euclidean Hessian of $\Psi(X)=-\ln\det X$, i.e., $G(X):=\Psi''(X)$. In this case, the unique geodesic segment connecting any $X,Y\in M$ is given by
\[
X(t)=X^{1/2}\left(X^{-1/2}YX^{-1/2}\right)^tX^{1/2},\qquad t\in[0,1],
\]
see \cite{NT01}.	
More precisely, $M$ is a Hadamard manifold (with curvature not identically zero), see for example \cite{Lang 1998}, Theorem~1.2, page 325. In particular, the unique geodesic $X=X(t)$ such that $X(0)=X$ and $X'(0)=V$ is given by 
\begin{equation}\label{eq:exsdm100}
X(t)=X^{1/2}{\rm e}^{tX^{-1/2}VX^{-1/2}}X^{1/2}.
\end{equation}
Thus, from \eqref{eq:exsdm100} and \eqref{eq:exsdm2}, there exist $\alpha_j^k\geq 0$ such that the steepest descent iteration of the sequence generated by the method~\ref{algor1} is given by   
\[
X^{k+1}=(X^{k})^{1/2}{\rm e}^{t_k(X^k)^{-1/2}V^k(X^k)^{-1/2}}(X^k)^{1/2},
\]
with,
\[
V^k=-\sum_{i\in I(X^k,V^k)}\alpha^k_i X^{k}f'_i(X^k)X^k,\quad \sum_{i\in I(X^k,V^k)}\alpha^k_i=1.
\]
\begin{remark}
Under the assumption of convexity on the vector function $F$, if $(M,G)$ is the Riemannian manifod in the first or in the second example, then Corollary~\ref{res:fconv100} assures the full convergence of the sequence generated by Method~\ref{algor1}. This fact doesn't necessarily happens if $(M,G)$ is the Riemanniana manifold in the last example, since in this case $(M,G)$ has curvature not-positive, i.e., $K\leq0$. However, Theorem~\ref{resconv1} assures at least partial convergence. 
\end{remark}
\section{Final remarks}
We have extended the steepest descent method with Armijo's rule for multicriteria optimization to the Riemannian context. Full convergence is obtained under to the assumptions of quasi-convexity of the multicriteria function and non-negative curvature of the Riemannian manifold. A subject in open is to obtain the same result without restrictive
assumption on the curvature of the manifold. Following the same line of this paper, as future propose we have the extension, to the context Riemannian, of the proximal method (see  Bonnel et al. \cite{Benar2005-1}) and Newton method (see Fliege et al. \cite{Benar2009}), both for multiobjective optimization.

\end{document}